\newtheorem{theorem}{Theorem}[section]
\theoremstyle{definition}
\newtheorem{example}[theorem]{Example}
\theoremstyle{prop}
\newtheorem{prop}[theorem]{Proposition}
\theoremstyle{coro}
\newtheorem{coro}[theorem]{Corollary}
\theoremstyle{remark}
\numberwithin{equation}{section}
\begin{document}

\title{$p$-subgroups of units in $\mathbb{Z}G$}

\author{Wolfgang Kimmerle}
\address{ Fachbereich Mathematik, IGT,  Universit\"{a}t
Stuttgart, Pfaffenwaldring 57, 70550 Stuttgart, Germany}
\email{kimmerle@mathematik.uni-stuttgart.de}
\thanks{The authors were partially supported by the DFG priority program SPP 1489}
\author{Leo Margolis}
\address{Facultad de Matem\'aticas, Universidad de Murcia, 30100 Murcia, Spain}
\email{leo.margolis@um.es}
\subjclass[2010]{Primary 16U60, 16S34; Secondary 20C10}
\date{February 15th, 2016}

\keywords{Unit Group, Integral Group Ring, Sylow like theorems}

\begin{abstract}
We consider the question whether a Sylow like theorem is valid in the normalized units 
of integral group rings of finite groups. After a short survey on the
known results we show that this is the case for integral group rings
of Frobenius groups. This completes work of M.A.~Dokuchaev,
S.O.~Juriaans and V.~Bovdi and M.~Hertweck. We analyze projective linear simple groups 
and show what can be achieved for $p$-subgroups with known methods.
\end{abstract}

\maketitle

\section{Introduction}

Let $G$ be a finite group. The integral group ring of $G$ is denoted by $\mathbb{Z} G$ 
and $\mathrm{V}(\mathbb{Z} G) $ denotes the subgroup of the unit group $\mathrm{U}(\mathbb{Z} G)$ consisting of 
all units with augmentation 1. 
\vskip1em
The question whether torsion subgroups of $\mathrm{V}(\mathbb{Z} G)$ are isomorphic (or
even conjugate) to subgroups of $G$ has a long history. By \cite{HertweckAnnals} we
know that in general the answer is negative, i.e. $\mathrm{V}(\mathbb{Z} G)$ may have 
torsion subgroups which are not isomorphic to subgroups of $G .$ The
smallest counterexample is that one constructed by M.~Hertweck and has
derived length four and order 
$2^{21} \cdot 97^{28} .$ However for many important classes of groups the
question is open. Thus the following question is in the focus of present 
research.  
\vskip1em

{\bf The subgroup isomorphim problem SIP.} Let $H$ be a given finite group
Suppose that whenever $H$ occurs as subgroup of $\mathrm{V}(\mathbb{Z} G)$ then $H$ is
isomorphic to a subgroup of $G .$ Then we say that the subgroup
isomorphism problem for $H$ has a positive answer.  
\vskip1em

Note that SIP contains the isomorphism problem IP for integral group rings (i.e. the question whether $\mathbb{Z} G \cong \mathbb{Z} H$ implies $G \cong H ).$ 
Thus if SIP has a positive answer for $H$, also the isomorphism problem
has a positive solution. Consequently classes of finite groups for
which IP is valid are of special interest for SIP.  
 
SIP is especially open for the case when $H$ is abelian or 
when $H$ is a $p$-group. In this article we
shall concentrate on the situation of $p$-subgroups.
SIP for $p$-groups leads naturally to the question of a 
Sylow or a Sylow like theorem in $\mathrm{V}(\mathbb{Z} G) .$ We say that a 
\vskip1em
{\bf Strong Sylow like theorem} is valid in $\mathbb{Z} G$, if for each
prime $p$ each finite $p$-subgroup of $\mathrm{V}(\mathbb{Z} G)$ is conjugate in $\mathbb{Q} G$ to a $p$-subgroup of $G $ 
\vskip1em
and we speak of a 
\vskip1em
{\bf Weak Sylow like theorem}, if for each prime $p$ each finite
$p$-subgroup of $\mathrm{V}(\mathbb{Z} G)$ is isomorphic to a $p$-subgroup of $G .$ 
\vskip1em

The word "Sylow like" is justified, because conjugacy takes place in
$\mathbb{Q} G.$ Although for abelian $G$ any $p$-subgroup of $\mathrm{V}(\mathbb{Z}G)$ is a subgroup of $G$, in general a finite $p$-subgroup of $\mathrm{V}(\mathbb{Z} G)$  is not conjugate within $\mathbb{Z} G$ to a subgroup of $G .$ Already in the integral group ring of the 
smallest nonabelian group $S_3$ there are different conjugacy classes of involutions in $\mathrm{V}(\mathbb{Z}
S_3) ,$ cf. \cite[p. 103]{KimDMV} or \cite[Example 3.4]{HertweckColloq}.  
  
Evidence that a Sylow like theorem for integral group rings is valid is given by the fact that in
$\operatorname{GL}(n, \mathbb{Z})$ a strong Sylow like theorem holds
\cite{AboldPlesken} and that it holds when $G$ is a $p$-group by the
celebrated results of Roggenkamp - Scott \cite{RogSco} and Weiss
\cite{WAnn}.
Indeed it is an open question, whether a strong Sylow like theorem holds
for each finite group $G .$ As a first goal however may serve a weak
Sylow like theorem which is obviously equivalent to a positive answer
to SIP for $p$-groups.      
If a Sylow like theorem (strong or weak) is established for a single
prime $q$ we say that it holds with respect to $q$.   

 In the first section we give a survey on known results concerning
 Sylow like theorems for integral group rings. With respect to solvable 
 groups many positive results are known. Thus insolvable and also
 simple groups are nowadays objects of investigations. In Section 2
 we complete the proof that a strong Sylow like theorem is valid
 provided $G$ is a Frobenius group. 
 Section 3 deals with simple linear
 groups. The so-called HeLP - method permits in many situations positive answers.
 However, we show that for $G = PSL(2,p^2), p \geq 7$ or $G = PSL(3,3)$ open questions
 show up. We construct explicit subgroups in $\mathbb{Q}G$. If these subgroups would be in $\mathbb{Z} G$ they would
 establish a counterexample to SIP for $p$-groups and to any kind of
 a Sylow like theorem. 
  
Finally in Section 4 we show that embedding of $\mathbb{Z} G$ into a suitable larger
 group ring $\mathbb{Z} H$ each unit of order $p$ is conjugate
 within $\mathbb{Q} H$ to an element of $G .$

\section{Known results}

Throughout we consider integral group rings of finite groups. The following classical results provide the basis of all investigations.

\begin{theorem}\label{CL} \cite[Corollary 4.1]{CohnLivingstone}, \cite{ZK}
Let $U$ be a finite subgroup of $\mathrm{V}(\mathbb{Z}G)$. Then the exponent of $U$ divides the exponent of $G$ and the order of $U$ divides the order of $G$.
\end{theorem}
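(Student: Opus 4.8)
The two assertions are of quite different depth; I would derive both from the Berman--Higman vanishing lemma: \emph{if $u=\sum_{g\in G}a_g g\in\mathrm{V}(\mathbb{Z}G)$ is a torsion unit with $u\neq1$, then $a_1=0$}. For this, let $\rho\colon\mathbb{Z}G\hookrightarrow M_{|G|}(\mathbb{Z})$ be the left regular representation; since $u$ has finite order, $\rho(u)$ is diagonalisable over $\mathbb{C}$ with roots of unity $\lambda_1,\dots,\lambda_{|G|}$ as eigenvalues, so $|G|\,|a_1|=|\operatorname{tr}\rho(u)|=\bigl|\sum_i\lambda_i\bigr|\le\sum_i|\lambda_i|=|G|$ and $a_1\in\{-1,0,1\}$. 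If $a_1=\pm1$, then equality holds in the triangle inequality, all $\lambda_i$ coincide, $\rho(u)=\pm I$, and faithfulness of $\rho$ together with augmentation $1$ forces $u=1$, a contradiction. (Integrality is essential here; over $\mathbb{Q}G$ both conclusions below fail, e.g.\ $\mathrm{V}(\mathbb{Q}C_3)$ has a unit of order $6$.)

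\emph{Order of $U$ divides order of $G$.} Let $U$ act on $\mathbb{Q}G$ by left multiplication. The element $e=|U|^{-1}\sum_{u\in U}u\in\mathbb{Q}G$ is an idempotent, and left multiplication by $e$ is the projection of $\mathbb{Q}G$ onto $(\mathbb{Q}G)^{U}$; computing the trace of this projection two ways gives
\[
\dim_{\mathbb{Q}}(\mathbb{Q}G)^{U}=|G|\cdot\frac{1}{|U|}\sum_{u\in U}a_1(u),
\]
with $a_1(u)$ the coefficient of $1$ in $u$. By the lemma $a_1(u)=0$ for $u\neq1$ and $a_1(1)=1$, so the right side equals $|G|/|U|$; the left side being a non-negative integer, $|U|$ divides $|G|$.

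\emph{Exponent of $U$ divides exponent of $G$.} As $\exp(U)=\operatorname{lcm}_{u\in U}\operatorname{ord}(u)$, it suffices to show $\operatorname{ord}(u)\mid\exp(G)$ for every torsion unit $u$, and since the prime-power parts of $\operatorname{ord}(u)$ are the orders of suitable powers of $u$ and are pairwise coprime, we may assume $\operatorname{ord}(u)=p^{a}$. Then $\rho(u)$ has order exactly $p^{a}$, so a primitive $p^{a}$-th root of unity $\zeta$ is an eigenvalue of $\rho(u)$; as $\rho$ decomposes as $\bigoplus_\chi\chi^{\oplus\chi(1)}$ over the irreducible complex characters of $G$, $\zeta$ is an eigenvalue of $\chi_{0}(u)$ for some $\chi_{0}$. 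Passing to $\mathbb{Q}_pG=\prod_i A_i$, the unit $u$ has, in the component $A_{i_0}$ corresponding to $\chi_{0}$, a coordinate $u_{i_0}$ of order $p^{a}$; hence the commutative subalgebra $\mathbb{Q}_p[u_{i_0}]\subseteq A_{i_0}$ has $\mathbb{Q}_p(\zeta_{p^{a}})$ among its Wedderburn factors, so $\mathbb{Q}_p(\zeta_{p^{a}})$ embeds as a subfield into a corner of $A_{i_0}$, hence into an algebra Brauer--equivalent to $A_{i_0}$. The hard point --- and the main obstacle --- is to deduce $p^{a}\mid\exp(G)$ from this: $\mathbb{Q}_p(\zeta_{p^{a}})$ is totally ramified of degree $\varphi(p^{a})$ over $\mathbb{Q}_p$, whereas the simple components of $\mathbb{Q}_pG$ have centres inside $\mathbb{Q}_p(\zeta_{\exp G})$ and Schur indices and ramification controlled by the $p$-part of $\exp(G)$ (Roquette's theorem and related results on Schur indices of group-algebra components, local class field theory, the behaviour of Hasse invariants under restriction of scalars); a comparison of ramification indices of subfields then forces $p^{a}\mid\exp(G)$. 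This is exactly the place where integrality of $u$ re-enters, and no purely rational or character-theoretic argument can replace it.
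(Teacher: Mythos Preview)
The paper does not prove this theorem at all; it is stated in the ``Known results'' section with citations to Cohn--Livingstone and to [ZK], and then used as a black box. So there is no paper-proof to compare against, and I evaluate your argument on its own merits.

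Your Berman--Higman lemma and your derivation of $|U|\mid|G|$ from it are correct and are the standard arguments. (One tiny remark: in the case $a_1=-1$ you correctly invoke augmentation $1$ to exclude $u=-1$; good.)

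The exponent part, however, is not a proof --- and in fact the sketch you give cannot be completed as written. After passing to $\mathbb{Q}_pG=\prod_iA_i$ and observing that $\mathbb{Q}_p(\zeta_{p^a})$ sits inside a corner of some $A_{i_0}$, every remaining assertion you make (centres contained in $\mathbb{Q}_p(\zeta_{\exp G})$, Schur indices and ramification bounded by the $p$-part of $\exp G$, Roquette, Hasse invariants, \dots) is a statement about the Wedderburn components of $\mathbb{Q}_pG$ alone. None of it uses that $u$ lies in $\mathbb{Z}G$ rather than in $\mathbb{Q}G$ or $\mathbb{Q}_pG$. Hence, if the ``comparison of ramification indices'' you allude to really forced $p^a\mid\exp(G)$, it would equally well forbid elements of order $p^a$ in $\mathrm{V}(\mathbb{Q}_pG)$ --- and that is false. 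Your own example makes the point: $G=C_3$, $p=2$, the component $\mathbb{Q}_2(\zeta_3)$ of $\mathbb{Q}_2C_3$ contains $-1$ of order $2$, yet $2\nmid\exp(C_3)$. Your final sentence (``This is exactly the place where integrality of $u$ re-enters'') is an honest admission that the gap is there, but it does not close it.

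What actually makes the exponent statement work is a genuinely arithmetic step that uses $u\in\mathbb{Z}G$ before one ever leaves the integral world. One clean route for odd $p$: reduce $u$ modulo $p$ and show the order cannot drop. If $u^{p^{c}}\equiv 1\pmod{p}$ with $u^{p^{c}}\neq 1$, write $v:=u^{p^{c}}=1+pz$; the eigenvalues of $\rho(z)$ in the regular representation are $(\zeta-1)/p$ with $\zeta^{p^{a-c}}=1$, all of absolute value $<1$ for $p\ge 3$, yet they are algebraic integers forming full Galois orbits (the characteristic polynomial of the integer matrix $\rho(z)$ lies in $\mathbb{Z}[x]$), forcing them to be $0$ --- contradicting $v\neq1$. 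Thus $\bar u\in\mathrm{V}(\mathbb{F}_pG)$ still has order $p^{a}$, and one finishes inside $\mathbb{F}_pG$. (The prime $2$ needs extra care.) This is a different mechanism from the ramification heuristic you sketched; the latter, as stated, does not and cannot distinguish $\mathbb{Z}G$ from $\mathbb{Q}_pG$.
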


Theorem \ref{CL} in particular establishes SIP for cyclic groups of prime power order. 

One of the most important results is the following of A.~Weiss. 

\begin{theorem} \cite[41.12]{SehgalBook} \label{Wei}
A strong Sylow like theorem holds in $\mathrm{V}(\mathbb{Z} G)$ with
respect
to the prime $p$ when $G$ has a normal Sylow $p$-subgroup.
\end{theorem}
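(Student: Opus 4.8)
The plan is to settle the assertion first $p$-adically, by means of Weiss's rigidity theorem for $p$-adic representations of $p$-groups, and then to pass from $\mathbb{Z}_p$-conjugacy to $\mathbb{Q}G$-conjugacy by a partial-augmentation argument. Write $P$ for the Sylow $p$-subgroup of $G$; being normal it is the unique one, so ``conjugate to a $p$-subgroup of $G$'' means ``conjugate to a subgroup of $P$''. Let $U\le\mathrm{V}(\mathbb{Z}G)$ be a finite $p$-subgroup. By Theorem~\ref{CL} the order of $U$ divides $|P|$; moreover the image of $U$ under $\mathbb{Z}G\to\mathbb{Z}[G/P]$ is a finite $p$-subgroup of $\mathrm{V}(\mathbb{Z}[G/P])$, hence trivial by Theorem~\ref{CL} since $G/P$ has order prime to $p$. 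Thus $U\subseteq 1+K$ inside $\mathbb{Z}_pG$, where $K$ is the kernel of $\mathbb{Z}_pG\to\mathbb{Z}_p[G/P]$; it is exactly here that the hypothesis ``$U$ lives integrally'' enters.

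For the $p$-local step, regard $\Lambda:=\mathbb{Z}_pG$ as a bimodule over $(\mathbb{Z}_pG,\mathbb{Z}_pU)$ via left multiplication by $G$ and right multiplication by $U$. The two actions commute and $U^{\mathrm{op}}\cong U$, so $\Lambda$ is a $\mathbb{Z}_p[G\times U]$-lattice; the group $G\times U$ has normal Sylow $p$-subgroup $P\times U$, which is a $p$-group, and I restrict $\Lambda$ to $\mathbb{Z}_p[P\times U]$. Restricted further to $P\times 1$, $\Lambda$ is free over $\mathbb{Z}_pP$ of rank $[G:P]$, hence a permutation $\mathbb{Z}_pP$-lattice; and the quotient $\Lambda/K\cong\mathbb{Z}_p[G/P]$ carries the \emph{trivial} residual $U$-action, because $U\subseteq 1+K$ and $K$ is two-sided, so it is a permutation $\mathbb{Z}_pU$-lattice. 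Feeding these two facts into Weiss's rigidity theorem — applied inductively along a chief series of $P\times U$ through $P\times 1$, where at each stage the relevant quotient is again $\mathbb{Z}_p[G/P]$ with trivial, hence permutation, action — one concludes that $\Lambda$ is itself a permutation $\mathbb{Z}_p[P\times U]$-lattice. Writing $\Lambda\cong\bigoplus_i\mathbb{Z}_p[(P\times U)/H_i]$ and using that the left $P$-action is free, each $H_i$ meets $P\times 1$ trivially, so $H_i$ is the graph of a homomorphism $U_i\to P$ with $U_i\le U$; analysing the point stabilisers, just as in the classical case $G=P$ — where a permutation basis is a single free $P$-orbit $P\omega_0$ with $\omega_0\in(\mathbb{Z}_pP)^\times$ and $\omega_0 U\omega_0^{-1}\le P$ — one extracts a unit $x\in(\mathbb{Z}_pG)^\times$ with $x^{-1}Ux\le P$.

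It remains to globalise. Put $P_1:=x^{-1}Ux\le P$ and let $\phi\colon U\to P_1$, $u\mapsto x^{-1}ux$. Conjugation by the unit $x$ of $\mathbb{Q}_pG$ preserves partial augmentations, so for every $u\in U$ the elements $u$ and $\phi(u)$ have the same partial augmentations on each conjugacy class of $G$. Now view $\mathbb{Q}G$ as a $(\mathbb{Q}G,\mathbb{Q}U)$-bimodule in two ways, via the inclusion $U\subseteq\mathbb{Q}G$ and via $\phi$ followed by $P_1\subseteq G$. Since $\mathbb{Q}[G\times U]$ is semisimple these two structures are isomorphic as soon as they have the same character; but the character of either on a basis element $g\otimes u$ equals the trace of $a\mapsto gau$ on $\mathbb{Q}G$, which is $|C_G(g)|$ times the sum of the coefficients of $u$ over the conjugacy class of $g^{-1}$, a quantity depending only on the partial augmentations of $u$. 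Hence the characters agree, the bimodules are isomorphic, and a bimodule isomorphism here is left $\mathbb{Q}G$-linear, hence right multiplication by some $y\in(\mathbb{Q}G)^\times$; the intertwining condition then forces $y^{-1}uy=\phi(u)$ for all $u\in U$. Therefore $U$ is conjugate in $\mathbb{Q}G$ to $P_1\le P\le G$, as claimed. The one deep ingredient — and the main obstacle — is Weiss's rigidity theorem driving the $p$-local step; the initial reduction and the final globalisation are formal.
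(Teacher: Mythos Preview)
The paper does not supply its own proof of this theorem; it is quoted from Sehgal's book as a known result attributed to Weiss. Your write-up is exactly the standard argument one finds there --- Weiss's permutation-lattice theorem applied to $\mathbb{Z}_pG$ viewed as a $\mathbb{Z}_p[P\times U]$-lattice, followed by a descent from $\mathbb{Z}_p$-conjugacy to $\mathbb{Q}$-conjugacy via partial augmentations --- so in that sense you agree with the ``paper's proof'' completely.

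One step is thinner than it looks. When $G=P$ the permutation basis of $\mathbb{Z}_pP$ over $P\times U$ is a single free $P$-orbit, so its generator $\omega_0$ is automatically a unit of $\mathbb{Z}_pP$ and conjugation by $\omega_0$ carries $U$ into $P$. For general $G$ with $P\lhd G$, however, the permutation basis of $\mathbb{Z}_pG$ over $P\times U$ breaks into $[G{:}P]$ orbits with generators $\omega_1,\dots,\omega_{[G:P]}$ satisfying $\varphi_i(u)\,\omega_i=\omega_i\,u$ for possibly different homomorphisms $\varphi_i\colon U\to P$, and none of the $\omega_i$ need be a unit of $\mathbb{Z}_pG$. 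Producing a single conjugating unit $x\in(\mathbb{Z}_pG)^\times$ from this data is not ``just as in the classical case'': one has to use that $\Lambda$ is free of rank one over $\mathbb{Z}_pG$ (not merely over $\mathbb{Z}_pP$) and compare bimodules over the full group $G\times U$, or invoke the theory of $p$-permutation modules. This is standard and not a fatal gap, but your parenthetical hides genuine work. A smaller remark: Weiss's theorem applies in one shot with $N=P\times 1\lhd P\times U$, since $\Lambda|_{P\times 1}$ is free and $\Lambda/K$ carries trivial $U$-action; the inductive phrasing along a chief series is unnecessary and, as written, would require the stronger ``permutation restriction'' form of Weiss at intermediate steps. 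Your globalisation via partial augmentations and the bimodule character is correct and cleanly done.
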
 

With respect to group extensions the situation is clear when the prime
$p$ does not divide the order of a normal subgroup. 

\begin{theorem} \cite[Theorem 2.2]{DokuchaevJuriaans} \label{ext}
Let $N$ be a normal subgroup of $G$ and let $U$ be a torsion
subgroup of $\mathrm{V}(\mathbb{Z} G)$ with $gcd(|U|,|N|) = 1 .$ 
 
Denote by $\kappa
:\mathrm{V}(\mathbb{Z} G) \longrightarrow \mathrm{V}(\mathbb{Z} G/N) $ the map on the units induced by the
reduction $G \longrightarrow G/N .$ 
Then $U$ is conjugate
to a subgroup of $G$ within $\mathbb{Q} G$ if, and only if, $\kappa (U)$ is
conjugate to a subgroup of $G/N$ within $\mathbb{Q} G/N .$
\end{theorem}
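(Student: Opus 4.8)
The forward implication is immediate: $\kappa$ extends to the surjective $\mathbb{Q}$-algebra homomorphism $\mathbb{Q}G\twoheadrightarrow\mathbb{Q}(G/N)$, and this carries a $\mathbb{Q}G$-conjugate of $U$ lying in $G$ onto a $\mathbb{Q}(G/N)$-conjugate of $\kappa(U)$ lying in $G/N$. So the plan is to prove the converse: assuming $\kappa(U)$ is $\mathbb{Q}(G/N)$-conjugate to a subgroup of $G/N$, conjugate $U$ into $G$ inside $\mathbb{Q}G$.

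First I would reduce, by induction on $|N|$, to the case that $N$ is a $p$-group for some prime $p$. If $N$ contains a $G$-normal subgroup $M$ with $1\neq M\subsetneq N$, then applying the theorem to $G\twoheadrightarrow G/M$ (with normal subgroup $N/M$ and the group $\kappa_M(U)$, whose order is again prime to $|N/M|$) turns the hypothesis into ``$\kappa_M(U)$ conjugate into $G/M$'', and then applying the theorem to $(G,M,U)$ yields ``$U$ conjugate into $G$''; both instances are covered by the induction hypothesis since $|N/M|<|N|$ and $|M|<|N|$. The only $N$ not reached this way are the $G$-minimal normal subgroups, and such an $N$ is either elementary abelian (a $p$-group, the case I want) or a direct product of non-abelian simple groups. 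The latter, non-solvable, case is the one I expect to be the main obstacle: it is never a $p$-group, so the $p$-adic argument below does not apply and a separate treatment is needed (here $\gcd(|U|,|N|)=1$ at least forces $|U|$ to be odd, but that is far from enough).

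So assume $N$ is a $p$-group. The element $e=|N|^{-1}\sum_{n\in N}n$ is a central idempotent of $\mathbb{Q}G$; multiplication by $e$ identifies $\mathbb{Q}Ge$ with $\mathbb{Q}(G/N)$ so that $\mathbb{Q}G\twoheadrightarrow\mathbb{Q}Ge\cong\mathbb{Q}(G/N)$ extends $\kappa$, and $\mathbb{Q}G=\mathbb{Q}Ge\times\mathbb{Q}G(1-e)$. Thus $\kappa$ is a projection onto a direct factor, in particular surjective on unit groups; lifting a conjugating unit of $\mathbb{Q}(G/N)$ (padded by $1-e$) and replacing $U$ by this $\mathbb{Q}G$-conjugate, I may assume $\bar H:=\kappa(U)\leq G/N$. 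Now pass to the $p$-adic completion $\mathbb{Z}_pG$. Since $N$ is a normal $p$-subgroup, $\Delta(N)\mathbb{Z}_pG$ lies in the Jacobson radical, $\mathbb{Z}_pG/\Delta(N)\mathbb{Z}_pG\cong\mathbb{Z}_p(G/N)$, and the kernel $1+\Delta(N)\mathbb{Z}_pG$ of the induced map $\mathrm{U}(\mathbb{Z}_pG)\to\mathrm{U}(\mathbb{Z}_p(G/N))$ is a pro-$p$ group. Two consequences: (i) $\kappa|_U$ is injective, its kernel being a $p'$-torsion subgroup of a pro-$p$ group, so $U\cong\bar H$; (ii) if $H$ is the preimage of $\bar H$ in $G$, then $|H|=|N|\cdot|\bar H|$ with coprime factors, so by Schur--Zassenhaus $H=N\rtimes K$ with $\kappa$ restricting to an isomorphism $K\xrightarrow{\sim}\bar H$. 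Hence $U$ and $K$ are two subgroups of $\mathrm{U}(\mathbb{Z}_pG)$ mapping isomorphically onto $\bar H$. As $\bar H$ has order prime to $p$ while the kernel of the reduction is pro-$p$, the standard lifting/rigidity argument (vanishing of $H^1$ and $H^2$ of $\bar H$ with pro-$p$ coefficients) shows $U$ and $K$ are conjugate by some $x\in 1+\Delta(N)\mathbb{Z}_pG$; since $\kappa(x)=1$, this conjugation respects the two identifications with $\bar H$, i.e. $xux^{-1}=\sigma(u)$ for all $u\in U$, where $\sigma\colon U\xrightarrow{\sim}K$ is the isomorphism with $\kappa\circ\sigma=\kappa|_U$. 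So $U$ and $K$ are conjugate in $\mathrm{U}(\mathbb{Q}_pG)$.

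Finally I would descend this local conjugacy to $\mathbb{Q}$. That $U$ and $K$ are conjugate in $\mathrm{U}(\mathbb{Q}_pG)$ via $\sigma$ says precisely that the two $(\mathbb{Q}_pU,\mathbb{Q}_pG)$-bimodule structures on $\mathbb{Q}_pG$ — left multiplication through the inclusion $U\subseteq\mathrm{U}(\mathbb{Q}_pG)$ in one case, through $\sigma$ in the other — are isomorphic. Since $\mathbb{Q}U\otimes_{\mathbb{Q}}(\mathbb{Q}G)^{\mathrm{op}}$ is semisimple, the multiplicities of the simple constituents of a module over it are unchanged after extending scalars to $\mathbb{Q}_p$; hence the two $(\mathbb{Q}U,\mathbb{Q}G)$-bimodule structures on $\mathbb{Q}G$ are already isomorphic over $\mathbb{Q}$, that is, $U$ and $K$ are conjugate in $\mathrm{U}(\mathbb{Q}G)$. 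Undoing the idempotent reduction and the induction, the original $U$ is $\mathbb{Q}G$-conjugate to the subgroup $K$ of $G$. The two points I would expect to cost real work are the non-solvable base case $N$ a product of non-abelian simple groups flagged above, to which the $p$-adic argument does not reach, and — routine but essential — checking that $\Delta(N)\mathbb{Z}_pG$ is radical and $1+\Delta(N)\mathbb{Z}_pG$ pro-$p$, which is exactly the step where one uses that $N$ is a $p$-group whose order is prime to $|U|$.
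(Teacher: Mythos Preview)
The paper does not supply its own proof of this theorem; it is quoted from Dokuchaev--Juriaans as a known result, so there is nothing to compare against directly. That said, your sketch is largely sound, with one honestly flagged gap.

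For the forward direction and for the inductive reduction through a proper $G$-normal $M\subsetneq N$, your argument is correct. For the base case $N$ an elementary abelian $p$-group, your $p$-adic argument is the standard one and works: $\Delta(N)\mathbb{Z}_pG$ does lie in the Jacobson radical (reduce modulo $p$ and use that $\Delta(N)\mathbb{F}_pG$ is nilpotent when $N$ is a normal $p$-subgroup), $1+J(\mathbb{Z}_pG)$ is pro-$p$ since $\mathbb{Z}_pG$ is a $\mathbb{Z}_p$-order with $p\mathbb{Z}_pG\subseteq J$, and the Schur--Zassenhaus type conjugacy of two $p'$-lifts of $\bar H$ through a pro-$p$ kernel is routine. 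The Noether--Deuring descent from $\mathbb{Q}_p$ to $\mathbb{Q}$ is also fine: a right $\mathbb{Q}G$-module automorphism of $\mathbb{Q}G$ is left multiplication by a unit, and the bimodule condition translates exactly into conjugation realizing $\sigma$. So for solvable $N$ your proof is complete.

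The gap you flag is genuine. When the induction bottoms out at a minimal normal subgroup $N$ that is a direct product of non-abelian simple groups, $\Delta(N)\mathbb{Z}_pG$ is not contained in the radical for any prime $p$ (since $\mathbb{F}_pN$ is then not local), and your pro-$p$ lifting argument has no purchase. Nothing else in your outline addresses this case, so as written the proof is incomplete in the stated generality; you would need a different idea here, and consulting the original Dokuchaev--Juriaans paper is the right move. It is worth noting, however, that every application of Theorem~\ref{ext} actually made \emph{in the present paper} --- namely in the Frobenius-group section, where one passes first through the Frobenius kernel and then through the odd-order normal subgroup $M$ of the complement --- has $N$ nilpotent. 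So your solvable-$N$ argument already suffices for everything the paper needs.
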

    
An immediate consequence of the two preceeding theorems is that a
 strong Sylow like theorem holds in $\mathbb{Z} G$ provided $G$ is a nilpotent - by -
nilpotent group \cite[Theorem 2.9]{DokuchaevJuriaans}. So in particular the case of
supersolvable groups is settled.    

\vskip1em

With respect to group bases, i.e. to torsion subgroups of $\mathrm{V}(\mathbb{Z} G)$ with
the same order as $G$, even more is known. Recall that a group is called $p$-constrained when the generalized Fitting subgroup $F^*(G/O_{p'}(G))$ is a $p$-group
(equivalently is to say that $G/O_{p'}(G)$ has a normal $p$ - subgroup containing its own centralizer, cf. \cite[Ch.X,15.5]{HB3}). 
In particular any $p$-solvable group is $p$-constrained. 

\begin{theorem} \label{Sylowsoluble} \cite{KimmerleRoggenkamp} 
\begin{itemize}
\item[a)] Let $G$ be a $p$-constrained group. Then Sylow $p$-subgroups of group bases are rationally conjugate.
\item[b)] Let $G$ be a finite solvable group and let $H$
  be a group basis of $\mathbb{Z} G$. Let $p$ be a prime. 
Then each $p$ -subgroup of $H$ is conjugate within $\mathbb{Q} G$ to  a
subgroup of a Sylow $p$-subgroup of $G .$   
\end{itemize}
\end{theorem}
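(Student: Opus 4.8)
The plan is to prove part (a) and to deduce part (b) from it. For (b): a finite solvable group is $p$-constrained for every prime $p$, so (a) applies to $G$. Given a group basis $H$ of $\mathbb{Z}G$ and a $p$-subgroup $P\le H$, enlarge $P$ to a Sylow $p$-subgroup $S$ of $H$. By (a), applied to the two group bases $G$ and $H$, some Sylow $p$-subgroup of $H$ is conjugate within $\mathbb{Q}G$ to a Sylow $p$-subgroup of $G$; since all Sylow $p$-subgroups of $H$ are conjugate in $H$, we may take it to be $S$, and then the corresponding conjugate of $P$ lies inside a Sylow $p$-subgroup of $G$, which is the assertion of (b).

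For (a) it suffices to fix a group basis $H$ and $P\in\mathrm{Syl}_p(H)$ and to show that $P$ is conjugate within $\mathbb{Q}G$ to a Sylow $p$-subgroup of $G$: since all Sylow $p$-subgroups of $G$ are $G$-conjugate, composing such conjugacies then settles any pair of group bases. First reduce to the case $O_{p'}(G)=1$. Put $N=O_{p'}(G)$, $\bar G=G/N$, and let $\rho\colon\mathbb{Z}G\to\mathbb{Z}\bar G$ be the natural epimorphism. By the normal subgroup correspondence for group bases (see, e.g., \cite{SehgalBook}), $K:=H\cap(1+\ker\rho)$ is normal in $H$ with $|K|=|N|$, and $\rho(H)$ is a group basis of $\mathbb{Z}\bar G$ isomorphic to $H/K$; note that $|K|\ge|N|$ is already forced by Theorem \ref{CL} applied to $\rho(H)\le\mathrm{V}(\mathbb{Z}\bar G)$. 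Since $N$, and hence $K$, is a $p'$-group, $P\cap K=1$ and $\rho(P)$ is a Sylow $p$-subgroup of $\rho(H)$. Now $\bar G$ is again $p$-constrained and has $O_{p'}(\bar G)=1$, so the case $O_{p'}=1$ applied to $\bar G$ gives that $\rho(P)$ is conjugate within $\mathbb{Q}\bar G$ to a subgroup of $\bar G$. As $\gcd(|P|,|N|)=1$, Theorem \ref{ext} lifts this to the statement that $P$ is conjugate within $\mathbb{Q}G$ to a subgroup of $G$, and a comparison of orders (namely $|P|=|G|_p$) shows that $P$ lands in a Sylow $p$-subgroup of $G$.

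It remains to handle the case $O_{p'}(G)=1$. Then, by the equivalent description of $p$-constraint recalled above, $Q:=O_p(G)=F^*(G)$ and $C_G(Q)\le Q$; the coprimeness lift is no longer available. The normal subgroup correspondence, applied now to the $p$-group $Q$, produces a normal $p$-subgroup $Q_H=H\cap(1+\ker(\mathbb{Z}G\to\mathbb{Z}[G/Q]))$ of $H$ with $|Q_H|=|Q|$, contained in every Sylow $p$-subgroup of $H$, with $H/Q_H$ a group basis of $\mathbb{Z}[G/Q]$. Passing to $\mathbb{Z}_pG$, the plan is: first, using the $\mathbb{Z}_p$-lattice rigidity of Weiss \cite{WAnn} and the localisation techniques of Roggenkamp--Scott \cite{RogSco}, identify $Q_H$, up to conjugation within $\mathbb{Q}G$, with $Q$, so that after replacing $H$ by a conjugate we may assume $Q\trianglelefteq H$, $Q\le P$, and $Q$ is normalised by both $G$ and $H$; second, viewing $P$ as a finite $p$-subgroup of $\mathrm{V}(\mathbb{Z}_pG)$ containing the normal $p$-subgroup $Q$ of $G$, invoke Coleman's lemma (so that $P$ acts on $Q$ only through automorphisms induced by elements of $G$) and again Weiss's theorem, now feeding in the inclusion $C_G(Q)\le Q$, to conclude that $P$ is conjugate within $\mathrm{U}(\mathbb{Q}G)$ to a $p$-subgroup of $G$. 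A count of orders forces that $p$-subgroup to be a Sylow $p$-subgroup of $G$, which completes the proof.

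The step I expect to be the main obstacle is this last one: once $Q=O_p(G)$ with $C_G(Q)\le Q$ has been placed inside $P$, showing that $P$ itself --- and not merely its image modulo $Q$, which here is \emph{not} controlled by any coprimeness since $G/Q$ need not be a $p'$-group --- is rationally conjugate into $G$. This is exactly the rigidity that makes Theorem \ref{Wei} work when the normal $p$-subgroup is a Sylow subgroup, and to carry it over one must exploit, through Weiss's lattice arguments, that $C_G(Q)\le Q$ ties $G$ tightly to the $\mathbb{Z}_pG$-module structure; the preliminary identification of $Q_H$ with $Q$ is of the same delicate nature.
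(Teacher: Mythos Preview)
Your reduction via $N=O_{p'}(G)$ and Theorem~\ref{ext} is exactly what the paper does, and your derivation of (b) from (a) is fine. The divergence is in how you treat the case $O_{p'}(G)=1$. Here $F^*(G)$ is a $p$-group, and the paper simply invokes the $F^*$-theorem of Roggenkamp--Scott: any group basis $H$ of $\mathbb{Z}G$ is then $p$-adically (hence rationally) conjugate to $G$ as a whole. In particular a Sylow $p$-subgroup of $H$ is carried to one of $G$, and nothing further is needed.

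What you outline instead is essentially an attempt to reprove (a fragment of) the $F^*$-theorem from its ingredients: normal subgroup correspondence for $Q=O_p(G)$, Weiss's rigidity, Coleman's lemma, and the condition $C_G(Q)\le Q$. You correctly identify the hard step --- passing from $Q_H\sim Q$ to $P$ rationally conjugate into $G$ when $G/Q$ is not a $p'$-group --- and you are right that this is precisely where the depth lies. But that step \emph{is} the $F^*$-theorem; your sketch does not supply an independent argument for it, only a plan. So as written there is a genuine gap, and the cleanest fix is to do what the paper does: cite the $F^*$-theorem as a black box at that point, after which the proof of (a), and hence of (b), is complete in two lines.
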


Roggenkamp and Scott discovered that the isomorphism problem has a
strong positive solution provided the generalized Fitting subgroup $F^*(G)$ is
a $p$-group \cite[Theorem 19]{Rog}, \cite{Sco} rsp. Different group bases of integral 
group rings of such groups are even $p$-adically conjugate. This result
is often called the {\bf $F^*$-theorem}, see also \cite{HertweckUnits, HertweckKimmerle} for details. Clearly the $F^*$-theorem and
Theorem \ref{ext} establish the proof of Theorem \ref{Sylowsoluble}. 
In contrast to Theorem \ref{Wei}
the $F^*$-theorem does not make any statements on torsion $p$-subgroups which are not
contained in a group basis. The example given in \cite[XIV,\S2,2.1 Proposition]{RogTay} shows that
in the 2-adic group ring $\mathbb{Z}_2S_4$ exist two non conjugate subgroups
of order 8. So even  
under the assumptions of the $F^*$-theorem ($F^*(S_4) = F(S_4)$ is a
$2$-group) torsion $p$-subgroups of
$\mathrm{V}(\mathbb{Z} G)$ are in general not $p$-adically conjugate to a subgroup of $G .$ 

Whether a Sylow like theorem holds for all solvable groups is still an
open question. A 
minimal counterexample to this has by Theorem \ref{ext} the property that its
Fitting subgroup is a $p$-group. This explains why an extension of
the $F^*$-theorem to torsion subgroups is highly desired.  
\vskip1em       
     
If one assumes that Sylow $p$-subgroups of $G$ have a special
structure much more is known. This is especially the case when $G$ has
abelian or quaternion Sylow subgroups. We collect the known results in
one theorem.

\vskip1em

\begin{theorem} \label{hamiltonian}
\begin{itemize} 
\item[a)]  If $G$ has cyclic Sylow $p$-subgroups then with respect
  to $p$ the weak Sylow like 
  theorem holds for $\mathbb{Z} G.$ \cite{KimmerleC2C2, HertweckCpCp} 
\item[b)] If $G$ is $p$-constrained and $G$ has abelian Sylow $p$-subgroups, then with respect to $p$ the strong Sylow like 
  theorem holds for $\mathbb{Z} G$ \cite[Proposition 2.11]{DokuchaevJuriaans}, \cite[Proposition 3.2]{BaechleKimmerle}.  
\item[c)]  The weak Sylow like theorem holds with respect to $2$ provided $G$ has abelian or generalized quaternion Sylow $2$-subgroups \cite[Proposition 4.8]{KimmerleSylow} or dihedral Sylow $2$-subgroups \cite{SIP}.  
\item[d)]  The weak Sylow like theorem holds for an odd  prime $p$ in $\mathrm{V}(\mathbb{Z} G)$ provided Sylow $p$-subgroups of $G$ are isomorphic to $C_p \times C_p.$ This follows from Theorem \ref{CL}. 
\end{itemize}
\end{theorem}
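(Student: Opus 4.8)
The four assertions have rather different depth, so I would treat them separately: part (d) is elementary and I would give it in full, while parts (a)--(c) I would deduce from the quoted papers, recalling only the strategy, since the substantial work is done there.

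For (d) the plan is to use nothing beyond Theorem~\ref{CL}. Let $U$ be a finite $p$-subgroup of $\mathrm{V}(\mathbb{Z}G)$, where a Sylow $p$-subgroup $P$ of $G$ is isomorphic to $C_p\times C_p$. By Theorem~\ref{CL} the order of $U$ divides $|G|$, hence divides $p^2$, and the exponent of $U$ divides the exponent of $G$, hence divides $p$. A $p$-group whose order divides $p^2$ and whose exponent divides $p$ is trivial, cyclic of order $p$, or elementary abelian of order $p^2$; in each case it is isomorphic to a subgroup of $C_p\times C_p$. (Oddness of $p$ is not actually used; for $p=2$ the statement is already contained in (c).) This settles (d).

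For (b) the plan is to reduce to Weiss' theorem. Put $N=O_{p'}(G)$ and $\overline{G}=G/N$. Since $G$ is $p$-constrained, $C_{\overline{G}}(O_p(\overline{G}))\le O_p(\overline{G})$. As $O_p(\overline{G})$ is normal it is contained in every Sylow $p$-subgroup $\overline{P}$ of $\overline{G}$, and $\overline{P}$ being abelian we get $\overline{P}\le C_{\overline{G}}(O_p(\overline{G}))\le O_p(\overline{G})\le\overline{P}$, so $\overline{P}=O_p(\overline{G})$ is a \emph{normal} Sylow $p$-subgroup of $\overline{G}$. Hence by Theorem~\ref{Wei} the strong Sylow like theorem holds for $\mathbb{Z}\overline{G}$ with respect to $p$. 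If $U\le\mathrm{V}(\mathbb{Z}G)$ is a finite $p$-subgroup then $\gcd(|U|,|N|)=1$, so Theorem~\ref{ext} transports the conclusion for $\kappa(U)\le\mathrm{V}(\mathbb{Z}\overline{G})$ back to $\mathbb{Q}G$; this reproduces \cite[Proposition 2.11]{DokuchaevJuriaans} and \cite[Proposition 3.2]{BaechleKimmerle}.

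For (a) and (c) the plan is to invoke the cited results directly, since the point there is exactly the one Theorem~\ref{CL} does \emph{not} settle, namely the isomorphism type: Theorem~\ref{CL} only gives $|U|\le|P|$ and $\exp(U)\le\exp(P)$, which — already for $P=Q_8$, where $C_2\times C_2$ has admissible order and exponent but is not a subgroup — does not force $U\hookrightarrow P$. One therefore has to prove that $U$ is cyclic in case (a), and cyclic or generalized quaternion (respectively dihedral) in case (c); this is where the HeLP method together with local and block-theoretic analysis enters, and it is carried out in \cite{KimmerleC2C2, HertweckCpCp} for (a) and in \cite[Proposition 4.8]{KimmerleSylow} and \cite{SIP} for (c). Once the isomorphism type of $U$ is pinned down, Theorem~\ref{CL} again bounds $|U|$ and one concludes $U$ is isomorphic to a subgroup of $P$. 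The main obstacle — and the reason (a) and (c) are genuinely harder than (b) and (d) — is precisely this control of the isomorphism type of torsion $p$-subgroups of $\mathrm{V}(\mathbb{Z}G)$.
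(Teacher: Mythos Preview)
Your proposal is correct and matches the paper's treatment: the paper states this theorem without proof, giving only the citations for (a)--(c) and the remark ``This follows from Theorem~\ref{CL}'' for (d). Your write-up is faithful to that, and the extra detail you supply for (b) and (d) is accurate (in (d) you might make explicit that $\exp(U)$ and $|U|$ are $p$-powers, so that dividing $\exp(G)$ and $|G|$ forces them to divide the $p$-parts $p$ and $p^2$ respectively --- but this is implicit and standard).
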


Theorem \ref{hamiltonian} a) establishes SIP for groups of the form $C_p \times C_p$ with any prime $p$. The only other non-cyclic group for which SIP is known is $C_4 \times C_2$ \cite{SIP}.

Certainly further results with respect to general finite groups are missing.  
At least for small abelian Sylow $2$-subgroups it is known that a 
strong Sylow like
theorem holds.    
   
\begin{prop} \cite{SIP} \cite[Proposition 3.4]{BaechleKimmerle} Let $G$ be a group whose Sylow $2$-subgroups are of order $\leq 8$ such that $G$ is not isomorphic to the alternating group of degree $7$. Then each $2$-subgroup of  $V (\mathbb{Z} G)$ is rationally conjugate to a subgroup of $G .$  
\end{prop}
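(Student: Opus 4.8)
The plan is to strip off the easy cases with the reduction theorems of Section~2 and then, via the classification of finite simple groups, to reduce to a short explicit list of almost simple groups for which the strong Sylow like theorem with respect to $2$ is already on record. First I would fix a finite $2$-subgroup $U$ of $\mathrm{V}(\mathbb{Z}G)$; by Theorem~\ref{CL} the order $|U|$ divides $|G|$, so $|U|\le 8$. Since $O_{2'}(G)$ has odd order, $\gcd(|U|,|O_{2'}(G)|)=1$ and the restriction of the reduction map $\kappa$ to $U$ is injective, so by Theorem~\ref{ext} it suffices to prove the statement for $G/O_{2'}(G)$; hence I may assume $O_{2'}(G)=1$ (the exclusion of $A_7$ survives, as $O_{2'}(A_7)=1$). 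Writing $P$ for a Sylow $2$-subgroup of $G$, the case $|P|\le 2$ is immediate: by Burnside fusion there is a single class of involutions, an order-$2$ unit has all partial augmentations $0$ except the one at that class, and the standard partial-augmentation criterion yields rational conjugacy into $G$. So assume $|P|\in\{4,8\}$.

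Next I would dispose of the $2$-constrained groups, i.e.\ those with $F^*(G)=O_2(G)$. If $P$ is abelian, Theorem~\ref{hamiltonian}(b) gives the conclusion directly. If $P$ is non-abelian, so $P\cong D_8$ or $P\cong Q_8$, then inspecting the normal subgroups of $P$ against the constraint $C_G(O_2(G))\le O_2(G)$ shows that either $O_2(G)=P$, in which case Theorem~\ref{Wei} applies, or $G\cong S_4$, which is treated directly by the HeLP method together with the gluing step below. A short check using the classification shows that if $P$ is cyclic or $P\cong C_4\times C_2$ then no component of $G$ could have its Sylow $2$-subgroup inside $P$, so in those cases $E(G)=1$ and $G$ is $2$-constrained; in the cyclic case Burnside's normal $p$-complement theorem moreover forces $G=P$ after the reduction. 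All of these are thus covered.

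There remains the case $E(G)\ne 1$. Since each component has Sylow $2$-subgroup of order $\ge 4$ and these sit inside $P$, there is exactly one component $L$, and $O_2(G)$ has order $\le 2$. By Walter's theorem on simple groups with abelian Sylow $2$-subgroups, the Gorenstein--Walter theorem on dihedral ones, and the Brauer--Suzuki theorem on (generalized) quaternion ones, $L$ is one of: $PSL(2,q)$ for suitable odd $q$ (including $A_5$, $PSL(2,7)$, $A_6$, $A_7$), $PSL(2,8)$, $SL(2,q)$ with $q\equiv\pm 3\pmod 8$ (including $SL(2,5)$), $J_1$, or a Ree group ${}^2G_2(q)$; moreover $G$ lies between $L$ (possibly times an $O_2(G)$ of order $2$) and a controlled almost-automorphic overgroup such as $PGL(2,q)$, $P\Sigma L(2,q)$, $P\Gamma L(2,q)$ or $C_2\times A_5$. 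The hypothesis $G\not\cong A_7$ removes exactly the Gorenstein--Walter possibility $L=A_7$ (dihedral Sylow $2$-subgroup) for which the argument below fails. For each remaining $G$ on this list the strong Sylow like theorem with respect to $2$ is established: for $PSL(2,q)$, $PGL(2,q)$ and $SL(2,q)$ by Hertweck's and subsequent work on the Zassenhaus conjecture, and the simple groups with elementary abelian Sylow $2$-subgroup of order $8$, together with $C_2\times A_5$ and the automorphic overgroups, reduce to these by HeLP computations. In every case the argument runs in two steps: HeLP shows that each torsion unit of $2$-power order in $\mathrm{V}(\mathbb{Z}G)$ is rationally conjugate to an element of $G$; and then one \emph{glues} --- given $U$ with $|U|\le 8$, one conjugates a cyclic subgroup of $U$ of maximal order into $G$ and runs a normalizer/Weiss-type argument, as in the proofs of the weak statements of Theorem~\ref{hamiltonian}(c), \cite{KimmerleSylow} and \cite{SIP}, to upgrade this to rational conjugacy of all of $U$.

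The main obstacle is this last step in the almost simple case with non-abelian Sylow $2$-subgroup --- $S_4$, $S_5=PGL(2,5)$, $PSL(2,7)$, $A_6$, the $PGL(2,q)$ and the $SL(2,q)$ --- where HeLP only pins down the individual torsion units of order $2$ and $4$ and a genuine lifting is needed to control a whole subgroup of order $8$; and for $A_7$, with its dihedral Sylow $2$-subgroup, neither HeLP nor the available gluing rules out a hypothetical exotic $2$-subgroup of order $8$ in $\mathrm{V}(\mathbb{Z}A_7)$, which is precisely why $A_7$ must be excluded.
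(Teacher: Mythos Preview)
The paper does not supply its own proof of this proposition: it appears in the survey section on known results and is quoted with attribution to \cite{SIP} and \cite[Proposition~3.4]{BaechleKimmerle}. There is therefore no in-paper argument to compare your proposal against directly.

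That said, the skeleton you describe --- reduce modulo $O_{2'}(G)$ via Theorem~\ref{ext}, separate the $2$-constrained case (handled by Theorems~\ref{Wei} and~\ref{hamiltonian}(b) together with the small residual case $S_4$) from the case $E(G)\neq 1$, and in the latter invoke the classification of simple groups whose Sylow $2$-subgroups are abelian (Walter), dihedral (Gorenstein--Walter) or quaternion (Brauer--Suzuki) to arrive at a finite explicit list of almost simple targets --- is indeed the architecture of the arguments in the cited sources, so in that sense your plan is on the right track.

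However, what you have written is a road map, not a proof. Your own final paragraph names the genuine difficulty and then leaves it open: for a non-cyclic $U$ of order~$8$ in the dihedral and quaternion cases (already for $S_4$, and for $\operatorname{PSL}(2,q)$, $\operatorname{PGL}(2,q)$, $\operatorname{SL}(2,q)$), HeLP controls only the cyclic subgroups of $U$, and the passage from ``every element of $U$ is rationally conjugate into $G$'' to ``$U$ is rationally conjugate into $G$'' requires an explicit character-theoretic gluing of the kind carried out in Example~\ref{CoverS5} of the present paper or in \cite{SIP}. You gesture at ``a normalizer/Weiss-type argument'' but do not perform it, and it is precisely this step --- together with the fact that it cannot presently be completed for $A_7$ --- that constitutes the content of the proposition. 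As submitted, the proposal identifies the obstacle correctly but does not overcome it.
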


\section{Frobenius Groups}
\begin{example}\label{CoverS5}
Let $G$ be the covering group of the symmetric group of degree $5$
whose Sylow $2$-subgroup is a generalized quaternion group. We show
that a Strong Sylow like theorem holds for $\mathrm{V}(\mathbb{Z}G)$. 

The \texttt{GAP}-Id of $G$ is \texttt{[240, 89]}. The Sylow $2$-subgroup of $G$ is a generalized quaternion group of
order $16$ and the Sylow $3$- and $5$-subgroups of $G$ are
cyclic. Thus by Theorem \ref{hamiltonian} any $p$-subgroup of
$\mathrm{V}(\mathbb{Z}G)$ is isomorphic to a subgroup of $G$. 
By \cite[Example 1]{BovdiHertweck} the Zassenhaus Conjecture holds for
$G$. This implies that any $3$- or $5$-subgroup of
$\mathrm{V}(\mathbb{Z}G)$ is rationally conjugate to a subgroup of
$G$. 
So let $U \leq \mathrm{V}(\mathbb{Z}G)$ be a $2$-group and let $P$ be
a Sylow $2$-subgroup of $G$. If $U$ is cyclic it is rationally
conjugate to a subgroup of $P$ again by \cite[Example
1]{BovdiHertweck}. 

So assume $U$ is not cyclic, then $U$ is a
(generalized) quaternion group of order $8$ or $16$. The group $G$ has
exactly one maximal normal subgroup $N$, which is isomorphic to
$\operatorname{SL}(2,5)$, and contains exactly two conjugacy classes
of elements of order $4$. Let $a$ and $b$ be representatives of these
classes, such that $a$ lies in $N$ while $b$ does not.
Since $N$ has index $2$ in $G$ there is a one-dimensional
representation $\sigma$ of $G$ mapping $N$ onto $1$ and elements
outside of $N$ onto $-1$. Assume first that $U$ is of order $8$ and
let $U$ be generated by $u$ and $v$. Both generators are rationally
conjugate to elements of $G$. If one knows to which elements of $P$
the units $u$ and $v$ are conjugate one also knows to which elements
$u^iv^j$ are conjugate - this may be read of from the value of
$\sigma$. Let $c$ and $d$ be elements of $P$ such that $u$ is
rationally conjugate to $c$, $v$ is rationally conjugate to $d$ and
$\langle c, d \rangle$ is a quaternion group of order $8$, this
construction is always possible in $G$. 
Then the isomorphism $\varphi:U \rightarrow \langle c, d \rangle$
mapping $u$ to $c$ and $v$ to $d$ preserves the character values of
all irreducible complex characters and hence $U$ and $\langle c,d
\rangle$ are rationally conjugate by \cite[Lemma 4]{Valenti}. 
In the same manner one handles the case when $U$ has order $16$. Let $U = \langle u, v \rangle$ such that $u$ has order $8$ and $v$ has order $4$. Once we know to which group elements $u$ and $v$ are rationally conjugate $\sigma$ again provides this information for $u^iv^j$. This allows again to construct a character value preserving isomorphism between $U$ and $P$.
\end{example}

\begin{theorem}
Let $G$ be a Frobenius group. Then a strong Sylow like theorem holds
in $\mathbb{Z} G$.
\end{theorem}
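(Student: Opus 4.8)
A Frobenius group $G$ has the form $G = K \rtimes H$ where $K$ is the Frobenius kernel (a nilpotent normal subgroup) and $H$ is a Frobenius complement. The plan is to fix a prime $p$ and a finite $p$-subgroup $U \leq \mathrm{V}(\mathbb{Z}G)$, and split into two cases according to whether $p$ divides $|K|$ or not.

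**Case 1: $p \nmid |K|$.** Here I would apply Theorem~\ref{ext} with $N = K$: since $\gcd(|U|,|K|) = 1$, the subgroup $U$ is rationally conjugate into $G$ if and only if its image $\kappa(U)$ in $\mathrm{V}(\mathbb{Z}(G/K))$ is rationally conjugate into $G/K \cong H$. But $H$ is a Frobenius complement, so its Sylow subgroups are cyclic or (for $p=2$) generalized quaternion. Thus Theorem~\ref{hamiltonian}(a) and (c) show $\kappa(U)$ is isomorphic to a $p$-subgroup of $H$; more is needed for rational conjugacy. Here I would lean on the known literature for Frobenius complements: by Zassenhaus's classification, $H$ is metacyclic or has a normal subgroup with a well-understood structure, and the work of Dokuchaev--Juriaans already establishes a strong Sylow like theorem for such $H$ (indeed for metacyclic groups via the results cited in Theorem~\ref{ext} and its corollaries, since these are nilpotent-by-nilpotent or close to it). So Case 1 reduces to assembling the already-cited results applied to $H$.

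**Case 2: $p \mid |K|$.** Since $K$ is nilpotent, its Sylow $p$-subgroup $K_p$ is normal in $K$ and characteristic, hence normal in $G$. Now $U$ is a $p$-subgroup of $\mathrm{V}(\mathbb{Z}G)$ and $G$ has the normal $p$-subgroup $K_p$. The clean tool here is Theorem~\ref{Wei} (Weiss's theorem): a strong Sylow like theorem holds with respect to $p$ whenever $G$ has a normal Sylow $p$-subgroup. However, $K_p$ need not be a full Sylow $p$-subgroup of $G$ — the complement $H$ may also contain elements of order $p$. So I cannot invoke Weiss directly. Instead I would argue that in a Frobenius group, if $p \mid |K|$ then $p \nmid |H|$: this is because a Frobenius complement $H$ acts fixed-point-freely on $K$, and a classical fact (Thompson) forces $K$ to be nilpotent while the orders $|K|$ and $|H|$ are coprime. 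Hence $K_p$ \emph{is} a Sylow $p$-subgroup of $G$, and moreover it is normal. Then Theorem~\ref{Wei} applies verbatim: $U$ is rationally conjugate to a subgroup of $K_p \leq G$.

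**The main obstacle and cleanup.** The genuine content is Case 1, reducing rational conjugacy of $p$-subgroups inside $\mathrm{V}(\mathbb{Z}H)$ to subgroups of the Frobenius complement $H$. The hard part is that a weak Sylow like theorem (isomorphism only) is not enough — one needs rational conjugacy, and for generalized quaternion Sylow $2$-subgroups this is exactly the delicate point illustrated by Example~\ref{CoverS5} above, which handles $\mathrm{SL}(2,5) \rtimes$ (something) style situations by hand using character-value-preserving isomorphisms and the criterion of \cite{Valenti}. I expect the proof to invoke the structure theory of Frobenius complements (cyclic, or binary polyhedral type, or metacyclic) and then cite, for each structural type, either Theorem~\ref{hamiltonian}(b) when Sylow subgroups are abelian and $H$ is $p$-constrained, or the generalized-quaternion analysis as in Example~\ref{CoverS5}, or the metacyclic results subsumed in the nilpotent-by-nilpotent corollary to Theorem~\ref{ext}. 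Once every structural type of complement is covered, combining Case 1 and Case 2 gives the theorem for all primes $p$ simultaneously, hence a strong Sylow like theorem in $\mathbb{Z}G$.
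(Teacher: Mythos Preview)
Your overall skeleton is sound, and Case~2 is exactly right: in a Frobenius group the kernel and complement have coprime orders, so if $p$ divides $|K|$ then the (characteristic) Sylow $p$-subgroup of the nilpotent kernel $K$ is a normal Sylow $p$-subgroup of $G$, and Theorem~\ref{Wei} finishes. This part matches the paper in spirit, and indeed this easy case is absorbed into the literature the paper cites.

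The gap is in Case~1. You correctly reduce to the complement $H$ via Theorem~\ref{ext}, and you correctly recognize that the generalized-quaternion Sylow $2$-subgroup situation is the delicate point and that Example~\ref{CoverS5} is the new ingredient designed for it. But what you actually write for Case~1 is a \emph{plan}---``lean on the known literature,'' ``invoke the structure theory of Frobenius complements,'' ``cite, for each structural type''---rather than a proof. You never pin down which reference disposes of which structural type, and some of your suggested citations do not apply: for instance Theorem~\ref{hamiltonian}(b) needs $p$-constraint, which fails for the insolvable complements involving $\operatorname{SL}(2,5)$ at $p=2$.

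The paper avoids this open-ended case analysis by invoking a single sharp reduction: \cite[Theorem~6.1]{DokuchaevJuriaansMilies} already establishes the strong Sylow like theorem for all Frobenius groups \emph{except} those admitting $S_5$ as a quotient, and shows the only outstanding issue is $2$-subgroups. From there the paper peels off odd-order normal pieces via Theorem~\ref{ext} (first the kernel, then the odd factor $M$ in the complement) until it reaches a single group of order $240$, identifies it as the one treated in Example~\ref{CoverS5}, and is done. So the missing idea in your write-up is precisely this reference: it is what turns your programmatic Case~1 into an actual proof with one concrete residual case.
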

\begin{proof}
By \cite[Theorem 6.1]{DokuchaevJuriaansMilies} we may assume that the
symmetric group of degree $5$ is an image of $G$ and we only have to
show that any $2$-subgroup $U \leq \mathrm{V}(\mathrm{Z}G)$ is
rationally conjugate to a subgroup of $G$. This implies that the
Frobenius kernel $N$ of $G$ is of odd order and by Theorem \ref{ext}
 we may proceed to $G/N$ which is isomorphic to a Frobenius complement
 $K$. 
By \cite[Theorem 18.6]{Passman} $K$ contains a normal subgroup $K_0$
of index $1$ or $2$ such that $K_0 \cong \operatorname{SL}(2,5) \times
M$ where $M$ is a characteristic normal subgroup of $K_0$ of odd
order. Again by Theorem \ref{ext} we may proceed to $K/M$ and this
group contains $\operatorname{SL}(2,5)$ as a normal subgroup of index
$1$ or $2$. If $K/M \cong \operatorname{SL}(2,5)$, the group $G$ can
not map onto $S_5$, thus $K/M$ is of order $240$. By \cite[Theorem
18.1]{Passman} the Sylow $2$-subgroup of $K/M$ is a generalized
quaternion group. 
Up to isomorphism there is only one non-solvable group of order $240$
whose 
Sylow $2$-subgroup is a generalized quaternion group - it is the
group handled in Example \ref{CoverS5}. Hence a strong Sylow like theorem holds for $K/M$ and then also for $G$. 
\end{proof}

\section{Crucial examples for simple linear groups}

In order to extend the known results from classes of more or less solvable groups to general finite groups it is a first step to consider nonabelian simple groups and their relatives. In this section we provide explicit examples where the known methods fail to prove a Sylow like theorem for $\mathrm{V}(\mathbb{Z}G)$. We first describe a technical ingredient.

Let $x^G$ be the conjugacy class of the group element $x$ in $G$ and let $u = \sum\limits_{g \in G} z_g g \in \mathbb{Z}G$. Then $\varepsilon_x(u) = \sum\limits_{g \in x^G} z_g$ is called the partial augmentation of $u$ with respect to $x$. Sometimes $\varepsilon_x(u)$ is also denoted as $\varepsilon_{x^G}(u)$. The relevance of partial augmentations for rational conjugation of units is provided by \cite[Theorem 2.5]{MarciniakRitterSehgalWeiss}: A unit $u \in \mathrm{V}(\mathbb{Z}G)$ is rationally conjugate to an element of $G$ if and only if $\varepsilon_x(u) \geq 0$ for all $x \in G$.  

We start with an example for the strong Sylow like theorem.
 
 \begin{theorem}\label{PSL2p2}
Let $G = \operatorname{PSL}(2, p^2)$, with $p$ prime.
\begin{itemize}
\item[a)]  If $p \leq 5$ and $U$ is a subgroup of $\mathrm{V}(\mathbb{Q}G)$ isomorphic to $C_p \times C_p$, then $U$ is rationally conjugate to a subgroup of $G$.
\item[b)] However, if $p \geq 7$, then $\mathrm{V}(\mathbb{Q}G)$ contains a subgroup $U'$ isomorphic to $C_p \times C_p$ such that all the elements of $U'$ are rationally conjugate to elements of $G$ while $U'$ is not rationally conjugate to a subgroup of $G$. 
\end{itemize}
\end{theorem}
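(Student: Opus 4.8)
The plan is to translate both assertions into a statement about the permutation action of $\mathrm{PGL}_2(\mathbb{F}_p)$ on $\mathbb{P}^1(\mathbb{F}_p)$, after classifying up to rational conjugacy the subgroups at issue. Set $q=p^2$ and consider the subgroups $U\le\mathrm{V}(\mathbb{Q}G)$ with $U\cong C_p\times C_p$ such that every element of $U$ is rationally conjugate to an element of $G$; in b) this is assumed, and for $U\le\mathrm{V}(\mathbb{Z}G)$ it is automatic — a nonidentity element of $U$ has prime order $p$, its two possibly nonzero partial augmentations (with respect to the two $G$‑classes of elements of order $p$) sum to $1$, and the HeLP argument applied to the two rational exceptional characters of degree $(q+1)/2$ pins them down to $(1,0)$ or $(0,1)$, whence by the Marciniak–Ritter–Sehgal–Weiss criterion the element is rationally conjugate to one in $G$. (So a) holds for every $C_p\times C_p$ in $\mathrm{V}(\mathbb{Z}G)$; for $\mathrm{V}(\mathbb{Q}G)$ some such restriction on $U$ is genuinely needed, since already $\mathrm{V}(\mathbb{Q}A_5)$ contains copies of $C_2\times C_2$ assembled from non–group‑like involutions.)

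Next I would record the $p$‑local data. A Sylow $p$‑subgroup $P$ of $G$ is elementary abelian of order $q$, hence $\cong C_p\times C_p$; it is self‑centralizing and $N_G(P)/P$ is cyclic of order $(q-1)/2$, acting on $P\cong\mathbb{F}_q^{+}$ by multiplication by the squares of $\mathbb{F}_q^{\times}$. Thus the $p+1$ subgroups of order $p$ of $P$ — the lines, identified with $\mathbb{P}^1(\mathbb{F}_p)$ via $P\cong\mathbb{F}_p^{2}$ — fall into exactly two $N_G(P)$‑orbits of size $(p+1)/2$, which are the two $G$‑classes of order‑$p$ subgroups; call these types $1$ and $2$ and let $\mathcal{C}_0$ (the type‑$1$ lines) be the resulting standard $2$‑colouring of $\mathbb{P}^1(\mathbb{F}_p)$. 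From the character table of $\mathrm{PSL}(2,q)$ one reads that, restricted to $P$, the trivial character is $1_P$, the Steinberg character is $\rho_P$, a principal series character of degree $q+1$ is $1_P+\rho_P$, a cuspidal character of degree $q-1$ is $\rho_P-1_P$, and the two exceptional characters $\eta_1,\eta_2$ of degree $(q+1)/2$ — which are rational, since $\mathbb{Q}(\eta_i)=\mathbb{Q}(\sqrt q)=\mathbb{Q}$ — are $1_P+S_1$ and $1_P+S_2$, where $S_i$ is the sum of the $(q-1)/2$ linear characters of $P$ whose kernel is a type‑$i$ line. Hence every $\chi\in\mathrm{Irr}(G)$ is constant on the nonidentity $p$‑elements except $\eta_1,\eta_2$, which take the values $\tfrac{1+p}{2},\tfrac{1-p}{2}$ on type‑$1$ elements and the reverse on type‑$2$ elements.

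The heart of the matter is the following classification lemma. Write $\phi$ for the inclusion $C_p\times C_p\hookrightarrow\mathrm{V}(\mathbb{Q}G)$. For each $\chi\ne\eta_1,\eta_2$ the character $\chi\circ\phi$ is forced: it is the unique character of $C_p\times C_p$ with value $\chi(1)$ at $1$ and the prescribed $v_\chi\in\{0,1,-1\}$ off $1$, namely the one matching $\chi|_P$. For $\chi=\eta_i$ the only constraint is $\eta_i(\phi(x))\in\{\tfrac{1+p}{2},\tfrac{1-p}{2}\}$ for $x\ne1$; writing $\eta_i\circ\phi=a_0^{(i)}1_U+\sum_\ell a_\ell^{(i)}T_\ell$ over the lines $\ell$ of $U$, with $T_\ell$ the degree‑$(p-1)$ rational irreducible character of $U$ of kernel $\ell$, and using $\eta_i(1)=(q+1)/2$, an elementary computation forces $a_0^{(i)}=1$, each $a_\ell^{(i)}\in\{0,1\}$, $a_\ell^{(1)}+a_\ell^{(2)}=1$ for all $\ell$ (from $\eta_1+\eta_2$ restricting to $1_P+\rho_P$), and $\#\{\ell:a_\ell^{(1)}=1\}=(p+1)/2$; so $\phi$ determines a balanced $2$‑colouring $\mathcal{C}=\{\ell:a_\ell^{(1)}=1\}$ of $\mathbb{P}^1(\mathbb{F}_p)$, with $\phi(\ell)$ rationally conjugate to a type‑$1$ (resp. type‑$2$) subgroup of $G$ according as $\ell\in\mathcal{C}$ or not. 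Conversely, every balanced colouring is realised by some such $U$ — the required algebra map $\mathbb{Q}[C_p\times C_p]\to\mathbb{Q}G$ is built exactly like the one realising $P\le G$, the sole change being a transposition of the isomorphic components $A_{\eta_1}\cong A_{\eta_2}\cong M_{(q+1)/2}(\mathbb{Q})$ — and distinct balanced colourings give non‑rationally‑conjugate $U$'s, because two inclusions $C_p\times C_p\hookrightarrow\mathrm{V}(\mathbb{Q}G)$ with the same $\chi\circ\phi$ for all $\chi\in\mathrm{Irr}(G)$ are rationally conjugate; here one invokes that the rational Schur indices of $\mathrm{PSL}(2,p^2)$ are all $1$ (standard; for $\eta_1,\eta_2$ note their degree $(q+1)/2$ is odd, so index $2$ is excluded) together with the Noether–Deuring theorem. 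Finally $U$ is rationally conjugate to a subgroup of $G$ iff, $|U|=p^2$ being the Sylow order, it is rationally conjugate to $P$; iff some isomorphism $U\to P$ matches every $\chi\circ\phi$ with $\chi|_P$; iff $\mathcal{C}$ lies in the $\mathrm{Aut}(C_p\times C_p)=\mathrm{GL}_2(\mathbb{F}_p)$‑orbit of $\mathcal{C}_0$, where $\mathrm{GL}_2(\mathbb{F}_p)$ acts on the lines as $\mathrm{PGL}_2(\mathbb{F}_p)$ on $\mathbb{P}^1(\mathbb{F}_p)$.

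Granting the lemma, both parts are immediate. For a), $p\le5$ gives $(p+1)/2\le3$; as $\mathrm{PGL}_2(\mathbb{F}_p)$ is sharply $3$‑transitive on the $p+1$ points of $\mathbb{P}^1(\mathbb{F}_p)$ it is transitive on subsets of size $\le3$, so all balanced colourings lie in the single orbit of $\mathcal{C}_0$ and every admissible $U$ is rationally conjugate to $P$ (the case $p=2$, $q=4$, $G\cong A_5$ is degenerate — the three lines form one $G$‑class and there is no choice at all at $\eta_1,\eta_2$ — but yields the same conclusion). For b), $p\ge7$ gives $(p+1)/2\ge4$; being only sharply $3$‑transitive, $\mathrm{PGL}_2(\mathbb{F}_p)$ has more than one orbit on $\tfrac{p+1}{2}$‑subsets of $\mathbb{P}^1(\mathbb{F}_p)$ (for instance the cross‑ratio of four of the points is a nontrivial invariant, or $\binom{p+1}{(p+1)/2}$ fails to divide $|\mathrm{PGL}_2(\mathbb{F}_p)|=(p-1)p(p+1)$ for $p=7,11$ and exceeds it for $p\ge13$); picking a balanced colouring $\mathcal{C}'$ outside the orbit of $\mathcal{C}_0$ and the corresponding $U'$ produces $U'\cong C_p\times C_p$ all of whose elements are rationally conjugate to elements of $G$ yet with $U'$ not rationally conjugate to $P$, hence to no subgroup of $G$. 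The decisive — and hardest — step is the classification lemma, in particular the rigidity that $(\chi\circ\phi)_{\chi\in\mathrm{Irr}(G)}$ determines $U$ up to rational conjugacy, which rests on the Wedderburn structure of $\mathbb{Q}\,\mathrm{PSL}(2,p^2)$; the character restrictions to $P$, the forced‑versus‑free dichotomy at $\eta_1,\eta_2$, and the orbit count for $\mathrm{PGL}_2(\mathbb{F}_p)$ on $\mathbb{P}^1(\mathbb{F}_p)$ are routine.
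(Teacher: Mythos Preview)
Your overall architecture is correct and, if anything, cleaner than the paper's: where the paper proves the balancedness claim by an explicit non-cyclic HeLP computation with the exceptional character $\eta$, then checks by hand for $p=3,5$ that every admissible pattern $I\subset\{1,\dots,p-1\}$ occurs in $P$, and for b) exhibits a specific bad $I$ at $p=7$ and invokes $\binom{p-1}{(p-1)/2}>\tfrac{p^2-1}{2}$ for $p\ge11$, you recast everything as the orbit structure of $\mathrm{PGL}_2(\mathbb{F}_p)$ on balanced colourings of $\mathbb{P}^1(\mathbb{F}_p)$ and settle both parts uniformly via sharp $3$-transitivity and its failure. Both routes rest on the same rigidity principle (the paper cites it as Lemma~4 of Valenti). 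Your remark that part~a) as literally stated for $\mathrm{V}(\mathbb{Q}G)$ needs the extra hypothesis on elements is apt; the paper's own proof in fact begins ``let $U\le\mathrm{V}(\mathbb{Z}G)$'' and invokes Hertweck's result on partial augmentations.

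There is one genuine gap, however, in your realisation step for b). Transposing the components $A_{\eta_1}$ and $A_{\eta_2}$ does \emph{not} produce an arbitrary balanced colouring: it merely replaces $\mathcal{C}_0$ by its complement $\mathcal{C}_0^c$, and these lie in the \emph{same} $\mathrm{PGL}_2(\mathbb{F}_p)$-orbit (multiplication by any nonsquare of $\mathbb{F}_q^\times$ is an $\mathbb{F}_p$-linear automorphism of $P$ interchanging the two $N_G(P)$-orbits of lines). So the construction you sketch yields nothing new. What is actually required --- and what the paper does explicitly --- is, in each block $A_{\eta_i}\cong M_{(q+1)/2}(\mathbb{Q})$, to write down a block-diagonal embedding of $C_p\times C_p$ whose character is the prescribed $1_U+\sum_{\ell\in\mathcal{C}_i}T_\ell$; this is unobstructed precisely because these blocks are full matrix rings over $\mathbb{Q}$ and each $T_\ell$ is already $\mathbb{Q}$-realisable of degree $p-1$. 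With this correction your argument is complete.
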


\begin{coro}
Let $G = \operatorname{PSL}(2,p^2)$ and $p \leq 5$. Then a strong
Sylow like theorem holds in $\mathbb{Z}G$.
\end{coro}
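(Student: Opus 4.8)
The plan is to reduce the Corollary to Theorem~\ref{PSL2p2} a) and to the known results collected in Section~2. Write $G = \operatorname{PSL}(2,p^2)$ with $p \leq 5$, so $p \in \{2,3,5\}$, and let $U \leq \mathrm{V}(\mathbb{Z}G)$ be a finite $q$-subgroup for some prime $q$; we must show $U$ is rationally conjugate to a subgroup of $G$. The first step is to note that $G$ is simple, hence its own generalized Fitting subgroup, and to recall the elementary structure of Sylow subgroups of $\operatorname{PSL}(2,p^2)$: the Sylow $p$-subgroup is elementary abelian of rank $2$ (isomorphic to $C_p \times C_p$), and every Sylow $\ell$-subgroup for $\ell \neq p$ is cyclic, since the element orders in $\operatorname{PSL}(2,p^2)$ other than $p$ divide $\tfrac{p^2-1}{\gcd(2,p^2-1)}$ or $\tfrac{p^2+1}{\gcd(2,p^2-1)}$, both of which are products of distinct primes to the first power in the relevant range. (For $p=2$, $G \cong A_5$; for $p=3$, $G \cong A_6$; for $p=5$, $G \cong \operatorname{PSL}(2,25)$.)

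The proof then splits by the prime $q$. If $q \neq p$, the Sylow $q$-subgroup of $G$ is cyclic, so by Theorem~\ref{hamiltonian}~a) the weak Sylow like theorem holds with respect to $q$; but in fact we want rational conjugacy, which for cyclic $q$-groups in these specific small groups is either covered by the validity of the Zassenhaus conjecture (known for $A_5$, $A_6$, and should be invoked for $\operatorname{PSL}(2,25)$) or, more uniformly, by Theorem~\ref{hamiltonian}~b): $G$ is certainly $q$-constrained (it has trivial $O_{q'}(G)$ only if... — more carefully, one uses that a cyclic Sylow subgroup forces the relevant local structure, or one simply cites the Zassenhaus conjecture for these three groups, which handles all cyclic subgroups). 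If $q = p$, then since the Sylow $p$-subgroup is $C_p \times C_p$, Theorem~\ref{CL} forces $|U|$ to divide $|G|_p = p^2$ and the exponent of $U$ to divide $p$, so $U$ is trivial, cyclic of order $p$, or isomorphic to $C_p \times C_p$. The trivial case is vacuous; the cyclic case of order $p$ is again handled by the Zassenhaus conjecture for these groups; and the case $U \cong C_p \times C_p$ is precisely the content of Theorem~\ref{PSL2p2}~a).

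The main obstacle — and the reason the Corollary is stated only for $p \leq 5$ — is concentrated entirely in the case $U \cong C_p \times C_p$, which is exactly where Theorem~\ref{PSL2p2}~b) shows the argument must break down for $p \geq 7$. Everything else is bookkeeping: one must be a little careful to record, for each of $A_5$, $A_6$, and $\operatorname{PSL}(2,25)$, that the Zassenhaus conjecture (hence rational conjugacy of all cyclic subgroups of prime power order) is available in the literature, and that these three groups have the stated Sylow structure so that no other group-theoretic case arises. Assembling these pieces, every finite $q$-subgroup of $\mathrm{V}(\mathbb{Z}G)$ for every prime $q$ is rationally conjugate to a subgroup of $G$, which is the strong Sylow like theorem. $\qed$
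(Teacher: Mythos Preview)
Your overall architecture --- split by the prime $q$, treat $q=p$ via Theorem~\ref{PSL2p2}~a) together with rational conjugacy of units of order $p$, and treat $q\neq p$ by other means --- matches the paper's. The gap is in the $q\neq p$ part. Your assertion that every Sylow $\ell$-subgroup of $\operatorname{PSL}(2,p^2)$ for $\ell\neq p$ is cyclic is false: for $p=3$ one has $(p^2-1)/2=4=2^2$, and for $p=5$ one has $(p^2-1)/2=12=2^2\cdot 3$, neither squarefree; more to the point, the Sylow $2$-subgroups of $A_6\cong\operatorname{PSL}(2,9)$ and of $\operatorname{PSL}(2,25)$ are dihedral of order $8$, not cyclic. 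So Theorem~\ref{hamiltonian}~a) does not apply at $\ell=2$. Your fallback to Theorem~\ref{hamiltonian}~b) also fails: a nonabelian simple group $G$ has $O_{q'}(G)=1$ and $F^*(G)=G$, so $G$ is $q$-constrained for no prime $q$. Finally, even granting the Zassenhaus Conjecture for these three groups, that only yields rational conjugacy of \emph{cyclic} torsion subgroups; the non-cyclic $2$-subgroups $C_2\times C_2$ and $D_8$ inside $\mathrm{V}(\mathbb{Z}G)$ are left untreated.

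The paper avoids this by not arguing from scratch for $q\neq p$: it simply invokes the arguments of \cite[Theorem~2]{SylowPSL}, which already establish the strong Sylow like theorem for $\operatorname{PSL}(2,p^f)$ at all primes other than the defining characteristic (this is where the dihedral Sylow $2$-subgroups are dealt with). For $q=p$ it does not need the full Zassenhaus Conjecture either; it uses the specific result \cite[Proposition~6.1]{HertweckBrauer} that units of order $p$ in $\mathrm{V}(\mathbb{Z}\operatorname{PSL}(2,p^f))$ are rationally conjugate to group elements, and then Theorem~\ref{PSL2p2}~a) for the $C_p\times C_p$ case. To repair your argument you must either cite such a source for the $2$-subgroups when $p\in\{3,5\}$, or supply a direct character-theoretic argument showing that any $D_8$ or $C_2\times C_2$ in $\mathrm{V}(\mathbb{Z}G)$ is rationally conjugate to a subgroup of $G$.
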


\begin{proof}
For primes not equal to $p$ we can employ the same arguments as in the
proof of \cite[Theorem 2]{SylowPSL} and thus the strong Sylow like
theorem follows from the fact that elements of order $p$ of $\mathrm{V}(\mathbb{Z}G)$ are rationally conjugate to elements of $G$ \cite[Proposition 6.1]{HertweckBrauer} and Theorem \ref{PSL2p2}.
\end{proof}

\textit{Proof of Theorem \ref{PSL2p2}:}
Let $P$ be a Sylow $p$-subgroup of $G$ and let $U \leq \mathrm{V}(\mathbb{Z}G)$ such that $U \cong C_p \times C_p$. By \cite[Proposition 6.1]{HertweckBrauer} for every $u \in U$ there exists a $g \in P$ such that $\varepsilon_C(u) = \varepsilon_C(g)$ for all conjugacy classes $C$ of $G$. In case $p = 2$ there is only one conjugacy class of involutions in $G$ and thus any $u \in U$ is rationally conjugate to any $g \in P$. In particular any isomorphism between $U$ and $P$ fixes the character values for all complex irreducible representations of $G$. Hence $U$ and $P$ are rationally conjugate because the character determines an ordinary representation up to equivalence, see also \cite[Lemma 4]{Valenti}. 

So assume $p \geq 3$. There are two conjugacy classes of elements of order $p$ in $G$, let $c$ and $d$ be representatives of these classes. Note that any non-trivial power of an $p$-element is conjugate to the element itself. This may be seen e.g. since the values of all irreducible characters are rational for $c$ and $d$. In a first step we prove.

\textit{Claim:} There are exactly $\frac{p^2-1}{2}$ elements of $U$ which are rationally conjugate to $c$ and $\frac{p^2-1}{2}$ which are rationally conjugate to $d$.

We will apply the non-cyclic HeLP-method as described in \cite{PSL2p3} with the character $\eta$ given in Table \ref{eta}.

\begin{table}[h]
\centering
\begin{tabular}{r|ccc} 
 & $1$ & $c$ & $d$ \\ \hline  \\[-2ex]
$\eta$ & $ \frac{p^2-1}{2} $ & $ \frac{p+1}{2} $ & $\frac{-p+1}{2}$
\end{tabular}
 \medskip
 \caption{A character of $\operatorname{PSL}(2,p^2)$.}\label{eta}
\end{table}

Let $u \in U$ be a fixed element and let $x$ be the number of cyclic subgroups of $U$ rationally conjugate to $\langle u \rangle$. Then there are $p + 1 - x$ cyclic subgroups of $U$ not being rationally conjugate to $\langle u \rangle$. Let $\eta(v) \in \{ \eta(c), \eta(d) \}$ such that $\eta(v) \neq \eta(u)$. Let $\chi$ be a non-trivial character of $U$ such that ${\rm{ker}}(\chi) = \langle u \rangle$. Then 
\[\sum\limits_{\alpha \in \langle u \rangle} \chi(\alpha)\eta(\alpha) = \frac{p^2+1}{2} + (p-1)\eta(u)\]
and $\sum\limits_{\alpha \in \langle t \rangle \setminus 1} \chi(\alpha) = -1$ for any $t \in U \setminus \langle u \rangle$. Hence
\begin{align*}
\langle \eta, \chi \rangle_U &= \frac{1}{p^2}\sum\limits_{\alpha \in U} \chi(\alpha)\eta(\alpha) \\
&= \frac{1}{p^2}\left(\frac{p^2+1}{2} + (p-1)\eta(u) - (x-1)\eta(u) - (p+1-x)\eta(v)  \right)  \\
&= \frac{1}{p^2}\left( \frac{p^2+1}{2} + p\eta(u) + x(\eta(v) - \eta(u)) - (p+1)\eta(v) \right) \\
&= \frac{1}{p^2}\left\{ \begin{array}{ll} p^2 + \frac{p^2+p}{2} - xp, & \eta(u) = \eta(c) \\  \frac{-p^2-p}{2} + xp, & \eta(u) = \eta(d) \end{array}\right.
\end{align*}
Since $\langle \eta, \chi \rangle_U$ is a non-negative integer and $0 \leq x \leq p+1$ this implies $x = \frac{p+1}{2}$ and the claim is proven.

So let $u,v \in U$ such that $u$ is rationally conjugate to $c$ and $v$ is rationally conjugate to $d$. Then $U$ is rationally conjugate to $P$ if and only if there exist $g, h \in P$ such that $g$ is conjugate to $c$, $h$ is conjugate to $d$ and $uv^i$ is rationally conjugate to $gh^i$ for any $1 \leq i \leq p-1$ by \cite[Lemma 4]{Valenti}. Let $I$ be a subset of $\{1,\ldots,p-1\}$ such that $uv^i$ is rationally conjugate to $u$ if and only if $i \in I$. By the above claim $I$ contains exactly $\frac{p-1}{2}$ elements. If $p = 3$ or $p = 5$, then for any $I$ we can find suitable $g, h \in P$ and $U$ is thus rationally conjugate to $P$. This proves part a) of the theorem.

We will show next that for any choice of $I$ there is a $U' \leq \mathrm{V}(\mathbb{Z}G)$, such that $\langle u\rangle \times \langle v \rangle =  U' \cong C_p \times C_p$, realizing that choice and this will imply part b) of the theorem, since for $p = 7$ the choice $I = \{1,2,4\}$ does not correspond to any choice of $g,h \in P$ and for $p \geq 11$ this already follows from combinatorics since we can fix $g$ without loss of generality and there are then $\frac{p^2-1}{2}$ choices for $h$, but $\binom{p-1}{\frac{p-1}{2}} > \frac{p^2 - 1}{2}$, if $p \geq 11$. So let $I = \{i_1,\ldots,i_{\frac{p-1}{2}}\}$.

There are exactly two irreducible complex characters of $G$ which do not take the same values on $c$ and $d$. Apart from $\eta$ given in Table \ref{eta} this is a character $\tilde{\eta}$, where $\tilde{\eta}(c) = \eta(d)$ and $\tilde{\eta}(d) = \eta(c)$. In any other Wedderburn component of $\mathbb{Q}G$ the groups $P$ and $U'$ are then conjugate. Since $\eta$ and $\tilde{\eta}$ take different values on elements of $U'$, which would otherwise contradict that any element of $U'$ is rationally conjugate to an element of $P$, it suffices to construct $u$ and $v$ in the Wedderburn component corresponding to $\eta$. For that let $A$ be any rational $(p-1)\times(p-1)$-matrix of order $p$, e.g. the normal rational form of such a matrix. Denote by $E_{p-1}$ the identity matrix of size $p-1$. Writing in block form set
\[u = \left(1, E_{p-1}, A^{-i_1},\ldots,A^{-i_{\frac{p-1}{2}}}\right) \]
and
\[v = \left(1,A,\ldots,A\right)\]
where $A$ appears exactly $\frac{p+1}{2}$ times in $v$. Then $U'$ satisfies all assumptions and realizes the given $I$. \hfill $\qed$ \\

We proceed with an example where the known methods fail to prove a weak Sylow like theorem. Recall that the Sylow $3$-subgroup if $\operatorname{PSL}(3,3)$ is a non-abelian group of order $27$ with exponent $3$.

\begin{prop}\label{PSL33}
Let $G = \operatorname{PSL}(3,3)$. 
\begin{itemize}
\item[a)] If the Zassenhaus Conjecture holds for $G$, then a weak Sylow like theorem holds in $\mathrm{V}(\mathbb{Z}G)$.
\item[b)] There exists a subgroup $U \leq \mathrm{V}(\mathbb{Q}G)$ such that $U$ is isomorphic to an elementary-abelian $3$-group of order $27$ and every element of $U$ has integral partial augmentations at all conjugacy classes of $G$ of order $3$ and vanishing partial augmentations otherwise. 
\end{itemize}
\end{prop}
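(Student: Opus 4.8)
The plan is to treat part a) one prime at a time, and part b) by an explicit construction inside the Wedderburn decomposition of $\mathbb{Q}G$, in the spirit of the proof of part b) of Theorem \ref{PSL2p2}.

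For a), write $|G| = 2^{4}\cdot 3^{3}\cdot 13$ and let $U\le \mathrm{V}(\mathbb{Z}G)$ be a $p$-subgroup. If $p=13$, the Sylow $13$-subgroup of $G$ is cyclic, so by Theorem \ref{CL} (or Theorem \ref{hamiltonian}(a)) $U$ is cyclic of order dividing $13$ and there is nothing to prove. If $p=2$, Theorem \ref{CL} forces $|U|$ to divide $16$ and the exponent of $U$ to divide that of $G$; since a Sylow $2$-subgroup $S$ of $G$ is semidihedral of order $16$, only finitely many isomorphism types remain for $U$, and I would eliminate those not occurring as subgroups of $S$ by the HeLP method (using \cite[Theorem 2.5]{MarciniakRitterSehgalWeiss}, the non-cyclic version from \cite{PSL2p3}, and \cite[Lemma 4]{Valenti}), invoking the Zassenhaus conjecture if necessary for an isomorphism type that HeLP leaves open. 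The crucial case is $p=3$: by Theorem \ref{CL} the order of $U$ divides $27$ and its exponent divides $3$, so $U$ is elementary abelian of order at most $27$; as a Sylow $3$-subgroup of $G$ is extraspecial of order $27$ and exponent $3$, the only candidate not isomorphic to a subgroup of $G$ is $C_3\times C_3\times C_3$. Here the Zassenhaus conjecture is used: if it holds (in the form stating that finite subgroups of $\mathrm{V}(\mathbb{Z}G)$ are rationally conjugate to subgroups of $G$), then such a $U$ would be rationally conjugate into $G$, which is impossible. Running over the three primes yields a); the weak rather than strong conclusion comes from the $p=2$ step being carried out only up to isomorphism.

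For b) I would build $U$ component by component. Let $3A$ and $3B$ be the two classes of elements of order $3$ in $G$, with representatives $x$ and $y$; since all irreducible character values on $x$ and $y$ are rational, the multiset of eigenvalues of a unit of order $3$ in any representation is determined by its character value there. The aim is a monomorphism $\tau\colon C_3\times C_3\times C_3\hookrightarrow \mathrm{V}(\mathbb{Q}G)$ with $\chi(\tau(u))\in\{\chi(x),\chi(y)\}$ for every $\chi\in\operatorname{Irr}(G)$ and every $u\ne 1$, the choice depending only on the subgroup $\langle u\rangle$; by \cite[Theorem 2.5]{MarciniakRitterSehgalWeiss} this produces precisely the partial augmentations asserted. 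First I would determine which assignments of the $13$ cyclic subgroups of $C_3\times C_3\times C_3$ to the two types are admissible, by restricting an irreducible character separating $3A$ and $3B$ (for example the one of degree $12$, with values $3$ and $0$) to the subgroups $C_3\times C_3$ and applying the non-cyclic HeLP method of \cite{PSL2p3}; this pins down the number of cyclic subgroups of each type. Fixing such an assignment, it remains to realise it in each Wedderburn component $M_{n}(\mathbb{Q}(\chi))$ of $\mathbb{Q}G$: a representation of $C_3\times C_3\times C_3$ there is a sum $\bigoplus_{\psi}m_{\psi}\,\psi$ over the linear characters $\psi$, and the requirement that a given $u$ acquire the eigenvalues of $x$, respectively $y$, becomes the condition that $\sum_{\psi\colon u\in\ker\psi}m_{\psi}$ equal the prescribed multiplicity of the eigenvalue $1$; since the incidence of cyclic subgroups and maximal subgroups of $C_3\times C_3\times C_3$ is that of the projective plane over $\mathbb{F}_3$, these linear systems have non-negative integral solutions computed from the character table, and on the trivial component one takes the trivial action so that $\tau(u)$ has augmentation $1$. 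Assembling the components gives $U$, and one may then exhibit explicit matrices exactly as in the proof of Theorem \ref{PSL2p2}.

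The step I expect to be the real difficulty is twofold. In a) it is the case $p=3$: part b) shows that the obstruction is invisible to partial augmentations and to HeLP, so it genuinely requires something of the strength of the Zassenhaus conjecture, and one must be careful about which precise form of that conjecture is being invoked. In b) the delicate point is simultaneous realisability: a single assignment of cyclic subgroups to the two types must be solvable in every Wedderburn component of $\mathbb{Q}G$ at once, so the combinatorial constraints coming from the HeLP restrictions have to be reconciled with the eigenvalue data of $x$ and $y$ across all irreducible characters of $G$ — verifying this compatibility is where the bulk of the work lies.
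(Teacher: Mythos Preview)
Your plan has a genuine gap in the very place you flag as delicate. In part~a) for $p=3$ you invoke the Zassenhaus conjecture ``in the form stating that finite subgroups of $\mathrm{V}(\mathbb{Z}G)$ are rationally conjugate to subgroups of $G$''. That is the strong subgroup form (often called ZC3), and assuming it makes the whole proposition vacuous: a weak Sylow like theorem for \emph{every} prime would follow immediately, with no work at all. The hypothesis of the proposition is the ordinary (first) Zassenhaus conjecture, about single torsion units. What it buys you is only that every unit of order~$3$ in $\mathrm{V}(\mathbb{Z}G)$ is rationally conjugate to $a$ or to $b$, i.e.\ has partial augmentations $(1,0)$ or $(0,1)$. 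The actual content of part~a) is that \emph{this} information, fed into a non-cyclic HeLP computation with the degree-$16$ character ($\chi(a)=-2$, $\chi(b)=1$), already excludes an elementary abelian subgroup of order~$27$: if $x$ of the $13$ cyclic subgroups of $U$ are of type $a$, then $\frac{1}{27}\sum_{u\in U}\chi(u)$ fails to be an integer. So HeLP is not powerless here; it is exactly the tool the paper uses, once ZC1 has normalised the partial augmentations.

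This misreading propagates into part~b). You aim to build $U\le\mathrm{V}(\mathbb{Q}G)$ with $\chi(\tau(u))\in\{\chi(x),\chi(y)\}$ for every non-trivial $u$, i.e.\ with every element rationally conjugate to a group element. But the HeLP inequalities you would impose in that step are the same ones that obstruct part~a), and they apply verbatim in $\mathbb{Q}G$; your system of multiplicities will not admit a non-negative integral solution across all components. The proposition only asks for \emph{integral} partial augmentations concentrated on the order-$3$ classes, not non-negative ones. The paper's construction exploits this: one generator $\alpha$ is given partial augmentations $(3,-2)$ (so $\alpha$ is \emph{not} rationally conjugate to any element of $G$), and the remaining non-trivial elements are assigned $(1,0)$ or $(0,1)$ according to $i+j+k\bmod 3$. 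With this relaxation only two Wedderburn components (those of the characters of degree $12$ and $16$) need separate treatment, and explicit block matrices over $\mathbb{Q}$ are written down. Your incidence-geometry/linear-system framework is the right shape, but you must allow at least one ``bad'' element; otherwise the construction cannot exist, which is precisely the point of the interplay between~a) and~b).
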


\begin{proof}
The Sylow $3$-subgroup of $G$ is a non-abelian group of order $27$ and exponent $3$.  There are two conjugacy classes of elements of order $3$ in $G$. Let $a$ and $b$ be representatives of these classes such that $a$ is central in some Sylow $3$-subgroup of $G$. Note that the character values of $a$ and $b$ are all rational, implying that any unit $u \in \mathrm{V}(\mathbb{Q}G)$ whose partial augmentations vanish on conjugacy classes not of order $3$ is rationally conjugate to its inverse. 

Assume first that the Zassenhaus Conjecture holds for $G$. Since up to isomorphism there is only one non-abelian group of order $27$ and exponent $3$ to prove part a), by Theorem \ref{CL}, it is sufficient to show that $\mathrm{V}(\mathbb{Z}G)$ contains no elementary-abelian group $U$ of order $27$. Let $x$ be the number of cyclic subgroups of $U$ whose non-trivial elements are rationally conjugate to $a$ and let $y$ be the number of cyclic subgroups of $U$ whose non-trivial elements are rationally conjugate to $b$. Then $0 \leq x,y \leq 13$ and $x + y = 13$. Let $\chi$ be a complex irreducible $16$-dimensional character of $G$, then $\chi(a) = -2$ and $\chi(b) = 1$. It follows that
\[ \frac{1}{27}\sum_{u \in U} \chi(u) = \frac{1}{27}(16 -4x +2y) = \frac{1}{27}(40 -6x) \]
has to be an integer. But this is not the case for any $x$. So in this case a weak Sylow like theorem holds for $\mathrm{V}(\mathbb{Z}G)$ by \cite[Proposition 5.1]{BaechleKimmerle}.  

We proceed by constructing an elementary-abelian group $U$ of order $27$ in $\mathrm{V}(\mathbb{Q}G)$ such that $U = \langle \alpha \rangle \times \langle \beta \rangle \times \langle \gamma \rangle$ and the elements of $U$ have the following partial augmentations. The partial augmentations on classes of order different from $3$ vanish for all non-trivial $u \in U$, moreover $(\varepsilon_a(\alpha), \varepsilon_b(\alpha)) = (3,-2)$ and $(\varepsilon_a(x), \varepsilon_b(x)) \in \{(1,0), (0,1)\}$ for any other non-trivial $u \in U$ such that 
\[(\varepsilon_a(\alpha^i\beta^j\gamma^k), \varepsilon_b(\alpha^i\beta^j\gamma^k)) = (1,0) \Longleftrightarrow i + j + k \equiv 0 \mod 3.\]

Restricting our attention to the degree of a character and its values on $a$ and $b$ any irreducible complex character of $G$ may be constructed as a linear combination with non-negative integer coefficients of characters being equal on $a$ and $b$ and the two characters given in Table \ref{CharPSL33}.

\begin{table}[h]
\centering
\begin{tabular}{r|ccc} 
 & $1$ & $a$ & $b$ \\ \hline  \\[-2ex]
$\chi$ & $12$ & $3$ & $0$ \\
$\varphi$ & $16$ & $-2$ & $1$
\end{tabular}
 \medskip
 \caption{Two characters of $\operatorname{PSL}(3,3)$.}\label{CharPSL33}
\end{table}

It thus suffices to construct $U$ in the two blocks of $\mathbb{Q}G$ corresponding to these two characters. For that let $A$ be some rational $2\times 2$-matrix of order $3$, e.g. $\begin{pmatrix} 0 & -1 \\ 1 & -1 \end{pmatrix}$, and denote by $E$ a $2\times 2$ identity matrix. In the block corresponding to $\chi$ set, writing in block matrix form,
\begin{align*}
\alpha &= (E, E, E, E, E, A), \\
\beta &= (E, E, A, A, A, A), \\
\gamma &= (E, A, A, E, A^2, A).
\end{align*}
In the block corresponding to $\varphi$ set
\begin{align*}
\alpha &= (A, A, A, A, A, A, A, A), \\
\beta &= (E, E, E, A, A, A^2, A^2, A^2), \\
\gamma &= (E, A, A^2, E, A^2, E, A, A^2).
\end{align*}
This construction realizes the partial augmentations on the elements of $U$ as given above.
\end{proof}

\section{Conjugacy in larger group rings}

In \cite[Problem 22]{Ari} the first author raised the following
problem: Given a unit $u \in \mathrm{V}(\mathbb{Z}G)$, does there
exist a group $H$ containing $G$ such that $u$ is conjugate to an
element of $G$ within $\mathrm{V}(\mathbb{Q}H)$? 

This may be regarded as a Sylow like theorem "between weak and strong".  M.~Hertweck remarked that this is true under the assumption that $u$ is of prime order, but never gave a proof. We embed this into a wider context and obtain Hertweck's remark as a corollary.

\begin{theorem}\label{Hall}
Let $G$ be a finite group, $p$ a prime and $P$ a $p$-subgroup of $\mathrm{V}(\mathbb{Z}G)$ of exponent $p$ isomorphic to some subgroup $U$ of $G.$ Then there exists a finite group $H$ containing $G$ such that $P$ is conjugate to $U$ within $\mathbb{Q}H.$
\end{theorem}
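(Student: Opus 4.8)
The plan is to use a permutation‑module trick so that the abstract isomorphism $P\cong U$ is "rigidified" inside a bigger group. First I would recall that, since $P$ has exponent $p$ and $P\le\mathrm{V}(\mathbb{Z}G)$, Theorem \ref{CL} already forces $p\mid |G|$, and the isomorphism $\psi\colon P\to U\le G$ is our starting datum. The key classical input is a theorem on integral representations of groups of prime exponent acting on each other's group rings: if $P$ acts on the $\mathbb{Z}$‑module $\mathbb{Z}G$ via its embedding in $\mathrm{V}(\mathbb{Z}G)$ by left multiplication, and $U$ acts on $\mathbb{Z}G$ by left multiplication via the inclusion $U\hookrightarrow G$, then after identifying $P$ with $U$ through $\psi$ the two $\mathbb{Z}[P]$‑lattices $\mathbb{Z}G$ become isomorphic — this is exactly where one invokes the Roggenkamp–Scott / Weiss type rigidity already cited in the excerpt (Theorem \ref{Wei} and the surrounding discussion), applied to the $p$‑group $P$: a $p$‑group has, up to conjugacy in $\mathbb{Q}\otimes$ the relevant ring, essentially one way to sit in a permutation lattice of a given isomorphism type. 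Concretely one shows $\mathbb{Q}G$, as a $\mathbb{Q}[P]$‑module (via $P\le\mathrm{V}(\mathbb{Z}G)$), is isomorphic to the permutation module $\mathbb{Q}[G/{\sim}]$ with $P$ acting as $U$ would.

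Next I would build the group $H$. The natural candidate is a wreath‑type construction: let $H$ be the subgroup of the symmetric group on $|G|$ points generated by the image of $G$ (acting on itself by right multiplication, say) together with the image of $P\le\mathrm{V}(\mathbb{Z}G)$ acting by left multiplication — more precisely, $H\le\operatorname{Sym}(G)$ should be generated by the regular copy $\rho(G)$ and by the permutations induced by the elements of $P$ on the $\mathbb{Z}$‑basis $G$ of $\mathbb{Z}G$, provided those elements of $P$ are themselves (signed) permutations of $\pm G$; since $P$ has exponent $p$ with $p$ odd or $p=2$ one arranges, using Weiss's theorem, that $P$ is conjugate in $\mathbb{Z}G$ to a group of monomial matrices, hence embeds in a signed permutation group, and one passes to $H\le\operatorname{Sym}$ of twice the points to absorb the signs. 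Then $G$ sits in $H$ as $\rho(G)$, and the point is that inside $\mathbb{Q}H$ — which contains both the $P$‑action and the $U$‑action on the same permutation module $\mathbb{Q}[\text{points}]$ — the module isomorphism from the previous paragraph exhibits an element $t\in\mathrm{U}(\mathbb{Q}H)$ with $t P t^{-1}=U$.

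The main obstacle, and the step I would spend the most care on, is the passage from "$\mathbb{Q}G$ is isomorphic as a $\mathbb{Q}[P]$‑module to the permutation module on which $U$ acts" to an honest conjugation living in $\mathbb{Q}H$ rather than merely in $\mathrm{GL}(|G|,\mathbb{Q})$. One must check that the conjugating matrix $t$ can be taken inside the group algebra $\mathbb{Q}H$ of the specific finite group $H$ just constructed — equivalently that the two embeddings $P\hookrightarrow\mathrm{Sym}(G)$ and $U\hookrightarrow\mathrm{Sym}(G)$ are conjugate by an element of the normalizer structure captured by $H$. The clean way is to choose $H$ large enough from the outset: take $H$ to be the full group generated by $\rho(G)$ and \emph{all} the monomial permutations occurring in a Weiss‑normal form of $P$ together with the images of $U$, so that $t$, which a priori lies in $\mathrm{GL}(n,\mathbb{Q})$, can be replaced by a permutation matrix using the Noether–Deuring argument (two $\mathbb{Z}[P]$‑lattices isomorphic over $\mathbb{Q}$ and both permutation lattices are isomorphic by a permutation, after stabilizing), and that permutation then lies in $\operatorname{Sym}(\text{points})$; one finally shrinks back to a finite $H$ by taking the subgroup generated by $G$, the $P$‑action, and this single permutation $t$. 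Verifying that $P\le\mathrm{V}(\mathbb{Z}H)$ throughout (so that the statement is about units of the integral group ring, matching the phrasing) and that $\langle G, P, t\rangle$ is genuinely finite are the remaining routine but essential checks.
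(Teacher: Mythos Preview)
Your proposal has a genuine gap at the point you yourself flag as ``the main obstacle,'' and the workaround you sketch does not close it.

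First, the appeal to Weiss is misplaced. Theorem~\ref{Wei} in the paper concerns groups $G$ with a normal Sylow $p$-subgroup; it does not say that an arbitrary exponent-$p$ subgroup $P\le\mathrm{V}(\mathbb{Z}G)$ acts on $\mathbb{Z}G$ by (signed) permutation matrices. Your claim that ``$P$ is conjugate in $\mathbb{Z}G$ to a group of monomial matrices'' is therefore unjustified in the generality needed. (What \emph{is} easy, via the Berman--Higman theorem, is that $\mathbb{Q}G$ is free of the same rank as a $\mathbb{Q}P$-module and as a $\mathbb{Q}U$-module; but that only yields a conjugating element $t\in\mathrm{GL}_{|G|}(\mathbb{Q})$.)

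Second, and more seriously, even if one grants a permutation $t\in\operatorname{Sym}(G)$ with $tL_ut^{-1}=L_{\psi(u)}$ for all $u\in P$ (where $L_x$ denotes left multiplication on $\mathbb{Q}G$), this does \emph{not} give $tut^{-1}=\psi(u)$ inside $\mathbb{Q}H$ for $H=\langle \lambda(G),t\rangle\le\operatorname{Sym}(G)$. The equality $tL_ut^{-1}=L_{\psi(u)}$ is an identity in the natural permutation representation $\mathbb{Q}H\to\operatorname{End}_{\mathbb{Q}}(\mathbb{Q}G)$, but that representation is in general far from faithful on the algebra $\mathbb{Q}H$ (its image has dimension at most $|G|^2$, while $\dim\mathbb{Q}H=|H|$ can be arbitrarily large). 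Two elements of $\mathbb{Q}H$ agreeing in this representation need not be equal. Your Noether--Deuring step does not help here: it would require both $\mathbb{Z}P$-lattices to be permutation lattices, which for the $P$-action on $\mathbb{Z}G$ is precisely what is unknown.

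The paper's argument avoids all of this by a completely different and short idea: embed $G$ into Hall's universal locally finite group, choose finitely many conjugating elements there that fuse all $G$-conjugacy classes of order-$p$ elements into a single class, and let $H$ be the (finite) group they generate together with $G$. In $\mathbb{Z}H$ every nontrivial element of $P$ and of $U$ then has partial augmentation $1$ on that single class and $0$ elsewhere, so any abstract isomorphism $P\to U$ preserves all character values of $H$, and conjugacy in $\mathbb{Q}H$ follows from the standard character criterion.
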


\begin{coro}
Let $G$ be a finite group and $p$ a prime. 
\begin{itemize}
\item[a)] Let $u \in \mathrm{V}(\mathbb{Z}G)$ be a unit of order $p.$ Then
there exists a finite group $H$ containing $G$ such that $u$ is
conjugate to some $g \in G$ within $\mathbb{Q}H.$ 
\item[b)] Let $U$ be a subgroup of $\mathrm{V}(\mathbb{Z} G)$ isomorphic to $C_p \times C_p .$
Then there exists a finite group $H$ containing $G$ such that $U$ is
conjugate to a subgroup of $G$ within $\mathbb{Q}H.$  
\end{itemize}
\end{coro}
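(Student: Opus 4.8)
The plan is to take for $H$ the symmetric group on the underlying set of $G$, with $G$ embedded in $H=S_{|G|}$ via its regular representation $\rho$. Then $\rho$ induces a ring embedding $\mathbb{Z}G\hookrightarrow\mathbb{Z}S_{|G|}$, so that $P\le\mathrm{V}(\mathbb{Z}G)\le\mathrm{V}(\mathbb{Z}S_{|G|})$, and, identifying $U$ with $\rho(U)$, also $U\le S_{|G|}\le\mathrm{V}(\mathbb{Z}S_{|G|})$. We may assume $P\neq 1$, and then $p\mid|G|$ by Theorem~\ref{CL}. Fix any abstract isomorphism $\varphi\colon P\to U$; one exists by hypothesis. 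The goal is to show that $\varphi$ is induced by conjugation with a unit of $\mathbb{Q}S_{|G|}$, and by \cite[Lemma~4]{Valenti} it suffices for this to check that $\varphi$ preserves the value of every irreducible complex character of $S_{|G|}$ on every element of $P$, where such a character is extended $\mathbb{Z}$-linearly to $\mathbb{Z}S_{|G|}$.

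The decisive point is that conjugacy in $S_{|G|}$ is detected by cycle type and that, under the regular representation, an element $g\in G$ of order $d$ is sent to a permutation all of whose cycles have length $d$; hence $\rho(g)$ and $\rho(g')$ are $S_{|G|}$-conjugate precisely when $g$ and $g'$ have the same order. In particular all elements of $G$ of order $p$ form a single conjugacy class $\mathcal{C}_p$ of $S_{|G|}$, of cycle type $p^{|G|/p}$. Since $P$ has exponent $p$, every non-trivial $u\in P$ has order $p$, and so has $\varphi(u)$; as $\varphi(u)$ is a genuine element of $G$, its image $\rho(\varphi(u))$ lies in $\mathcal{C}_p$, and therefore has partial augmentation $1$ at $\mathcal{C}_p$ and $0$ at every other class. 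For the generalized element $\rho(u)=\sum_g a_g\rho(g)$ one computes that its partial augmentation at an $S_{|G|}$-class of cycle type $d^{|G|/d}$ equals the sum of the coefficients $a_g$ over all $g\in G$ of order $d$, and vanishes at classes of non-uniform cycle type; moreover that sum is $\sum_C\varepsilon_C(u)$, taken over the $G$-classes $C$ of elements of order $d$. Invoking the well-known fact that a torsion unit of prime order $p$ has vanishing partial augmentation at every conjugacy class of elements of order different from $p$ (which includes the Berman--Higman equality $\varepsilon_1(u)=0$), and recalling $\sum_g a_g=1$, we conclude that this sum is $0$ unless $d=p$, in which case it is $1$. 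Hence $\rho(u)$ and $\rho(\varphi(u))$ have identical partial augmentations in $\mathbb{Z}S_{|G|}$, so identical values under every $\chi\in\operatorname{Irr}_{\mathbb{C}}(S_{|G|})$; the case $u=1$ is trivial.

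Therefore $\varphi$ preserves all complex character values of $S_{|G|}$, and \cite[Lemma~4]{Valenti} produces a unit of $\mathbb{Q}S_{|G|}$ conjugating $P$ onto $U$; taking $H=S_{|G|}$ completes the proof. The heart of the argument is that passing to a symmetric group collapses the a priori several $G$-classes of order-$p$ elements into one and thereby annihilates the sign obstruction of \cite[Theorem~2.5]{MarciniakRitterSehgalWeiss} to rational conjugacy inside $\mathbb{Q}G$. The two points that require care are, first, the input on the support of partial augmentations of prime-order units --- this is exactly what prevents the generalized element $\rho(u)$ from spilling over into classes of elements of order $p^2, p^3, \dots$, where it could not be matched by the honest $p$-element $\rho(\varphi(u))$ --- and, second, that \cite[Lemma~4]{Valenti} is being applied over $\mathbb{Q}H$ rather than over $\mathbb{Q}G$. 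The corollary then follows at once: for part~(a) apply the theorem with $P=\langle u\rangle$ and $U=\langle g\rangle$ for any $g\in G$ of order $p$ (such $g$ exists as $p\mid|G|$), and for part~(b) apply it with $P\cong C_p\times C_p$, which by Theorem~\ref{hamiltonian}(a) is isomorphic to a subgroup $U$ of $G$.
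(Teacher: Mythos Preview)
Your proof is correct and, in fact, establishes the underlying Theorem~\ref{Hall} before specializing to the corollary, just as the paper does. The overall strategy coincides with the paper's: enlarge $G$ to a finite group $H$ in which all elements of $G$ of order $p$ fall into a single $H$-conjugacy class, observe that the partial augmentations of every nontrivial $u\in P$ then collapse to the single value $1$ on that class (using Berman--Higman together with Hertweck's vanishing result, which the paper cites as \cite[Lemma~2.8]{HertweckColloq}), and conclude via \cite[Lemma~4]{Valenti}.

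Where you diverge is in the construction of $H$. The paper embeds $G$ into Hall's Universal Group $\Gamma$, picks finitely many elements $\gamma_2,\dots,\gamma_k\in\Gamma$ conjugating a fixed element of order $p$ to representatives of the remaining $G$-classes of order $p$, and sets $H=\langle G,\gamma_2,\dots,\gamma_k\rangle$; local finiteness of $\Gamma$ then guarantees $H$ is finite. You instead take $H=S_{|G|}$ via the left regular representation and exploit the elementary fact that $\rho(g)$ has cycle type $d^{|G|/d}$ for $d=|g|$, so that elements of $G$ of equal order are automatically $S_{|G|}$-conjugate. Your route is more concrete and avoids the machinery of Hall's group entirely; the paper's route yields a potentially much smaller $H$, adjoining only the conjugating elements needed. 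Both achieve the same reduction, and the remainder of the argument is identical.
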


{\bf Remark.} The corollary applies especially to the groups
$PSL(2,p^2)$ considered in Theorem \ref{PSL2p2}. Indeed in $\mathrm{V}(\mathbb{Z} PGL(2,p^2))$ a strong Sylow like theorem holds with respect to $p$.  \\

\textit{Proof of Theorem \ref{Hall}:} The proof will use Hall's Universal Group introduced in \cite{Hall} and nicely described in \cite[Chapter 6]{KegelWehrfritz}.

Let $c_1, \ c_2, \ldots ,c_k$ be representatives of the conjugacy classes
of elements of order $p$ in $G.$ If $c$ is an element in an other
conjugacy class in $G$ and $u$ an element in $P,$ then
$\varepsilon_c(u) = 0$ by \cite[Lemma 2.8]{HertweckColloq}. Let
$\Gamma$ be Hall's Universal Group containing $G.$ Then by
\cite[Theorem 6.1d)]{KegelWehrfritz} there exist elements
$\gamma_2, \ldots ,\gamma_k$ in $\Gamma$ such that $c_1^{\gamma_i} = c_i.$
Let $H = \langle G, \gamma_2, \ldots ,\gamma_k \rangle.$ 
Since $\Gamma$ is locally finite, $H$ is finite and $c_1, \ldots ,c_k$ are
all conjugate in $H.$ 

Thus viewing $P$ as a subgroup of $\mathrm{V}(\mathbb{Z}H)$ for every $u \in P$ we have $\varepsilon_c(u) = 1$, if $c$ is conjugate (within $H$) to $c_1$, and $\varepsilon_c(u) = 0$ otherwise. The same holds for any element $u \in U$. Hence for any isomorphism $\sigma: P \rightarrow U$ and any irreducible character $\chi$ of $H$ we get $\chi(\sigma(u)) = \chi(u)$ for every $u \in P.$ So by \cite[Lemma 4]{Valenti} $P$ is conjugate to $U$ within $\mathbb{Q}H.$  
\bibliographystyle{amsalpha}

\bibliography{Passman.bib}

\providecommand{\bysame}{\leavevmode\hbox to3em{\hrulefill}\thinspace}
\providecommand{\MR}{\relax\ifhmode\unskip\space\fi MR }
\providecommand{\MRhref}[2]{%
  \href{http://www.ams.org/mathscinet-getitem?mr=#1}{#2}
}
\providecommand{\href}[2]{#2}
\begin{thebibliography}{MRSW87}

\bibitem[AP78]{AboldPlesken}
H.~Abold and W.~Plesken, \emph{Ein {S}ylowsatz f\"ur endliche
  {$p$}-{U}ntergruppen von {${\rm GL}(n,{\bf Z})$}}, Math. Ann. \textbf{232}
  (1978), no.~2, 183--186. \MR{0476876 (57 \#16427)}

\bibitem[Ari07]{Ari}
\emph{Mini-{W}orkshop: {A}rithmetik von {G}ruppenringen}, Oberwolfach Rep.
  \textbf{4} (2007), no.~4, 3209--3239, Abstracts from the mini-workshop held
  November 25--December 1, 2007, Organized by Eric Jespers, Zbigniew Marciniak,
  Gabriele Nebe and Wolfgang Kimmerle, Oberwolfach Reports. Vol. 4, no. 4.

\bibitem[BH08]{BovdiHertweck}
V.~Bovdi and M.~Hertweck, \emph{Zassenhaus conjecture for central extensions of
  {$S_5$}}, J. Group Theory \textbf{11} (2008), no.~1, 63--74.

\bibitem[BK11]{BaechleKimmerle}
A.~B{\"a}chle and W.~Kimmerle, \emph{On torsion subgroups in integral group
  rings of finite groups}, J. Algebra \textbf{326} (2011), 34--46.

\bibitem[BM15]{PSL2p3}
A.~B{\"a}chle and L.~Margolis, \emph{Torsion subgroups in the units of the
  integral group ring of {$\operatorname{PSL}(2,p^3)$}}, Arch. Math. (Basel)
  \textbf{105} (2015), no.~1, 1--11.

\bibitem[CL65]{CohnLivingstone}
J.~A. Cohn and D.~Livingstone, \emph{On the structure of group algebras. {I}},
  Canad. J. Math. \textbf{17} (1965), 583--593.

\bibitem[DJ96]{DokuchaevJuriaans}
M.A. Dokuchaev and S.O. Juriaans, \emph{Finite subgroups in integral group
  rings}, Canad.J.Math. \textbf{48} (1996), no.~6, 1170--1179.

\bibitem[DJPM97]{DokuchaevJuriaansMilies}
M.A. Dokuchaev, S.O. Juriaans, and C.~Polcino~Milies, \emph{Integral group
  rings of {F}robenius groups and the conjectures of {H}. {J}. {Z}assenhaus},
  Comm. Algebra \textbf{25} (1997), no.~7, 2311--2325.

\bibitem[Hal59]{Hall}
P.~Hall, \emph{Some constructions for locally finite groups}, J. London Math.
  Soc. \textbf{34} (1959), 305--319.

\bibitem[HB82]{HB3}
B.~Huppert and N.~Blackburn, \emph{Finite groups iii}, Grundlehren der
  mathematischen Wissenschaften, vol. 243, Springer Verlag, Berlin Heidelberg
  New York, 1982.

\bibitem[Her01]{HertweckAnnals}
M.~Hertweck, \emph{A counterexample to the isomorphism problem for integral
  group rings}, Ann. of Math. \textbf{154} (2001), 115--138.

\bibitem[Her06a]{HertweckColloq}
\bysame, \emph{On the torsion units of some integral group rings}, Algebra
  Colloq. \textbf{13} (2006), no.~2, 329--348.

\bibitem[Her06b]{HertweckUnits}
\bysame, \emph{Units of p-power order in principal p-blocks of p-constrained
  groups}, arXiv:0612434v1 [math.RT], 2006.

\bibitem[Her07]{HertweckBrauer}
\bysame, \emph{Partial augmentations and {B}rauer character values of torsion
  units in group rings}, arXiv:0612429v2 [math.RA], 2004 - 2007.

\bibitem[Her08]{HertweckCpCp}
\bysame, \emph{Unit groups of integral finite group rings with no noncyclic
  abelian finite {$p$}-subgroups}, Comm. Algebra \textbf{36} (2008), no.~9,
  3224--3229.

\bibitem[HK02]{HertweckKimmerle}
M.~Hertweck and W.~Kimmerle, \emph{On principal blocks of {$p$}-constrained
  groups}, Proc. London Math. Soc. (3) \textbf{84} (2002), no.~1, 179--193.

\bibitem[Kim07]{KimmerleC2C2}
W.~Kimmerle, \emph{Torsion units in integral group rings of finite insoluble
  groups}, Oberwolfach Reports \textbf{4} (2007), no.~4, 3229--3230, Abstracts
  from the mini-workshop held November 25--December 1, 2007, Organized by Eric
  Jespers, Zbigniew Marciniak, Gabriele Nebe, and Wolfgang Kimmerle.

\bibitem[Kim13]{KimDMV}
\bysame, \emph{Unit groups of integral group rings: old and new}, Jahresber.
  Dtsch. Math.-Ver. \textbf{115} (2013), no.~2, 101--112.

\bibitem[Kim15]{KimmerleSylow}
\bysame, \emph{Sylow like theorems for $\mathrm{V}(\mathbb{Z}{G})$}, Int. J.
  Group Theory \textbf{4} (2015), no.~4, 49--59.

\bibitem[KR93]{KimmerleRoggenkamp}
W.~Kimmerle and K.W. Roggenkamp, \emph{A {S}ylow-like theorem for integral
  group rings of finite solvable groups}, Arch. Math. (Basel) \textbf{60}
  (1993), no.~1, 1--6.

\bibitem[KW73]{KegelWehrfritz}
O.H. Kegel and B.A.F. Wehrfritz, \emph{Locally finite groups}, North-Holland
  Publishing Co., Amsterdam-London; American Elsevier Publishing Co., Inc., New
  York, 1973, North-Holland Mathematical Library, Vol. 3.

\bibitem[Mar15]{SIP}
L.~Margolis, \emph{Subgroup {I}somorphism {P}roblem for units of integral group
  rings}, arXiv: 1511.04550 [math.RA], to appear J. of Group Theory, 2015.

\bibitem[Mar16]{SylowPSL}
\bysame, \emph{A {S}ylow theorem for the integral group ring of {PSL}(2,q)}, J.
  Algebra \textbf{445} (2016), 295--306.

\bibitem[MRSW87]{MarciniakRitterSehgalWeiss}
Z.~Marciniak, J.~Ritter, S.K. Sehgal, and A.~Weiss, \emph{Torsion units in
  integral group rings of some metabelian groups {II}}, J. Number Theory
  \textbf{25} (1987), no.~3, 340--352.

\bibitem[Pas68]{Passman}
D.~Passman, \emph{Permutation groups}, W. A. Benjamin, Inc., New
  York-Amsterdam, 1968.

\bibitem[Rog91]{Rog}
K.W. Roggenkamp, \emph{Proceedings of the {I}{C}{M}, {V}ol. {I}, {I}{I}
  ({K}yoto(1990)}, Math.Soc.Japan, Tokyo, 1991.

\bibitem[RS87]{RogSco}
K.W. Roggenkamp and L.L. Scott, \emph{Isomorphisms of {$p$}-adic group rings},
  Ann. of Math. (2) \textbf{126} (1987), no.~3, 593--647.

\bibitem[RT92]{RogTay}
K.W. Roggenkamp and M.J. Taylor, \emph{Group rings and class groups}, DMV
  Seminar, vol.~18, Birkh\"auser Verlag, Basel, 1992.

\bibitem[Sco87]{Sco}
L.L. Scott, \emph{Recent progress on the isomorphism problem}, The Arcata
  conference on representations of finite groups, vol. Part I, Proc.
  Sympos.Pure Math. {\bf 47}, AMS, Providence, 1987.

\bibitem[Seh93]{SehgalBook}
S.K. Sehgal, \emph{Units in integral group rings}, Pitman Monographs and
  Surveys in Pure and Applied Mathematics, vol.~69, Longman Scientific \&
  Technical, Harlow; copublished in the United States with John Wiley \& Sons,
  Inc., New York, 1993, With an appendix by Al Weiss.

\bibitem[Val94]{Valenti}
A.~Valenti, \emph{Torsion units in integral group rings}, Proc. Amer. Math.
  Soc. \textbf{120} (1994), no.~1, 1--4.

\bibitem[Wei88]{WAnn}
A.~Weiss, \emph{Rigidity of {$p$}-adic {$p$}-torsion}, Ann. of Math. (2)
  \textbf{127} (1988), no.~2, 317--332.

\bibitem[{\v{Z}}K67]{ZK}
{\`E}.~M. {\v{Z}}mud and G.~{\v{C}}. Kurenno{\u\i}, \emph{The finite groups of
  units of an integral group ring}, Vestnik Har'kov. Gos. Univ. \textbf{1967}
  (1967), no.~26, 20--26.

\end{thebibliography}

\end{document}